\theoremstyle{plain}
\newtheorem{thm}{Theorem}[section]
\newtheorem{prop}[thm]{Proposition}
\newtheorem{lemma}[thm]{Lemma}
\newtheorem{cor}{Corollary}[thm]
\newenvironment{manualtheorem}[1]{%
  \manualtheoreminner
}{\endmanualtheoreminner}
\theoremstyle{remark}
\newtheorem{rmk}[thm]{Remark}
\theoremstyle{definition}
\newtheorem{ex}[thm]{Example}
\newtheorem{smallex}[thm]{Example}
\newtheorem{defn}[thm]{Definition}
\newtheorem{ques}[thm]{Question}
\newtheorem{setting}[thm]{Setting}
\newcommand{\ZZ}{\mathbb{Z}}
\newcommand{\CC}{\mathbb{C}}
\newcommand{\kk}{\mathsf{k}}
\newcommand{\supp}{\operatorname{supp}}
\newcommand{\diffpower}[2]{{#1}^{\langle #2 \rangle}}
\newcommand{\diffpowerA}[2]{{#1}^{\langle #2 \rangle_A}}
\newcommand{\diffideal}[1]{\overline{#1}^{\text{diff}}}
\newcommand{\diffidealA}[1]{\overline{#1}^{\text{diff}_A}}
\newcommand{\invrs}{^{-1}}
\newcommand{\zdgeq}{\mathbb{Z}^d_{\geq 0}}
\definecolor{ao(english)}{rgb}{0.0, 0.5, 0.0}
\DeclareMathOperator{\End}{End}
\title{Asymptotic Behavior of Differential Powers}
\author{Jennifer Kenkel, Lillian McPherson, Janet Page, \\ Daniel Smolkin, Monroe Stephenson, and Fuxiang Yang}
\date{August 2021}
\begin{document}

\maketitle

\begin{abstract}
   In this paper, we study the \emph{differential power} operation on ideals, recently defined in \cite{MR3779569}. We begin with a focus on monomial ideals in characteristic 0 and find a class of ideals whose differential powers are eventually principal. We also study the containment problem between ordinary and differential powers of ideals, in analogy to earlier work comparing ordinary and symbolic powers of ideals. 
   
   We further define a possible closure operation on ideals, called the \emph{differential closure}, in analogy with integral closure and tight closure. We show that this closure operation agrees with taking the radical of an ideal if and only if the ambient ring is a simple $D$-module. 
\end{abstract}
\tableofcontents

\section{Introduction}
Given a commutative ring $R$ and an ideal $I \subseteq R$, there are various notions of powers of the ideal $I$. Two well-studied such notions are the $n^{th}$ \emph{ordinary power} $I^n$, which is the ideal generated by the products of $n$ elements of $I$, and, in characteristic $p>0$, the $e^{th}$ \emph{Frobenius power}, $I^{[p^e]}$, which is the ideal generated by $({p^e})^{th}$ powers of elements in $I$ \cite{HochsterHunekeTight}.  

One less-understood notion of ideal powers is the \emph{differential power} of an ideal. The $n$th differential power of an ideal $I$, with respect to a base ring $A$, is  denoted $\diffpowerA{I}{n}$ and defined using the ring of $A$-linear differential operators on $R$. Differential powers of ideals have recently seen a renewed interest, having been used to been used to generalize the celebrated Zariski-Nagata Theorem (\cite{MR3779569}, \cite{ZariskiNagataMixedChar}). For prime ideals in polynomial rings of characteristic zero, the differential power of an ideal coincides with the symbolic power of an ideal \cite{MR3779569}; one can even use a variant of the differential power to extend this result to any algebra essentially of finite type over a perfect field \cite[Theorem 4.6]{NoetherianOps}. Differential powers have also been called \emph{$n^{\textrm{th}}$ primitive ideals} in the literature \cite{HigherRelativePrimitiveIdeals}.

In this paper, we study the differential powers $\diffpowerA{I}{n}$ of an ideal $I$ by examining their behavior as $n$ grows arbitrarily large. We approach this problem from three directions. One basic question is whether the number of generators of $\diffpowerA{I}{n}$ must grow as $n$ grows. We show that, on the contrary, one can find ideals with arbitrarily many generators whose differential powers are eventually principle, even in the polynomial ring setting:

\begin{manualtheorem}{\ref{principal2d}, \ref{principal3d}}
Let $R$ be a polynomial ring in 2 or 3 variables over a field of characteristic $0$. If $I$ is an ideal such that $\sqrt{I} = (x_1x_2)$ over $\kk[x_1,x_2]$ or $\sqrt{I} = (x_1x_2x_3)$ over $\kk[x_1,x_2,x_3]$, then there exists a positive integer $N$ such that $\diffpower{I}{N}$ is principal.
\end{manualtheorem}

Next, we continue to work in the polynomial ring setting and study a containment problem on differential versus ordinary powers of ideals. Commutative algebraists have long studied which symbolic powers of an ideal are contained in which ordinary powers. For instance, a celebrated result of Ein, Lazarsfeld, and Smith shows that the symbolic power $\mathfrak p^{(dn)}$ is contained in $\mathfrak p^n$ whenever $\mathfrak p$ is a prime ideal in a regular $\mathbb C$-algebra of dimension $d$; see \cite{MR1826369}. Inspired by these results, we ask which differential powers of a monomial ideal are contained in which ordinary powers. For instance, we have the following:

\begin{manualtheorem}{\ref{irreduciblecontainment}} Let $R = \kk[x_1, \ldots, x_d]$ where $\kk$ is a field of characteristic 0. Let $I = (x_{i_1}^{\alpha_1},x_{i_2}^{\alpha_2},\ldots, x_{i_m}^{\alpha_m})$. If $c = \max\{\alpha_1,\ldots, \alpha_m, m+1\}$, then $\diffpower{I}{cn} \subseteq I^n$ for all $n$.
\end{manualtheorem}

On the other hand, we also show that a \emph{uniform} containment result analogous to that of \cite{MR1826369} is too much to hope for:

\begin{manualtheorem}{\ref{prop:noContainment}}
Let $R = \kk[x_1, \ldots, x_d]$ where $\kk$ is a field of characteristic 0. There does not exist a polynomial $p(n)$ such that $\diffpower{I}{p(n)}\subseteq I^n$ for all ideals $I \subseteq R$ and all $n\in \ZZ_{\ge 0}$.
\end{manualtheorem}

In section \ref{sec:diffClosure}, we go in a different direction and propose a notion of ideal closure using differential powers. Other studied powers of ideals, the ordinary power and the Frobenius power, lead to natural closure operations, the \emph{integral closure} \cite{HunekeSwansonIntegral}
and the \emph{tight closure} \cite{HochsterHunekeTight} respectively. Inspired by the definitions of these closure operations, we use differential powers to define a new closure operation, the \emph{differential closure}. The differential closure of an ideal $I$ in an $A$-algebra $R$ is denoted $\diffidealA{I}$. We show that this operation detects the simplicity or non-simplicity of $R$ as a module over its ring of $A$-linear differential operators, $D_A(R)$:

\begin{manualtheorem}{\ref{radicalDmodule}}
Let $R$ be a domain essentially of finite type over a Noetherian ring $A$. Then $\sqrt{I} = \diffidealA{I}$ for all ideals $I \subseteq R$ if and only if $R$ is a simple $D_A(R)-$module.
\end{manualtheorem}

The simplicity or non-simplicity of $R$ as a $D_A(R)$ module is closely tied to the singularities of $R$. For instance, it is shown in \cite{smithdmod} that, if $R$ is an $F$-split domain essentially of finite type over a perfect field $\kk$, then $R$ is strongly $F$-regular if and only if $R$ is simple as a $D_\kk(R)$-module.

{\bf Acknowledgements.}
This project started as a Research Experience for Undergraduates (REU) at the University of Michigan mentored by the first, third, and fourth named authors. We would like to acknowledge the University of Michigan Department of Mathematics, and in particular David Speyer, for organizing the REU program and arranging funding. We also thank Jack Jeffries and Elo\'isa Grifo for helpful discussions. The research was supported by the UM REU executive committee funds, Karen Smith's NSF grant DMS-1801697, and David Speyer's NSF grant DMS-1855135.

\section{Background on Differential Operators and Differential Powers}
In order to study differential powers and differential closure, we must familiarize ourselves with definitions of differential operators on a commutative algebra. We refer the reader to \cite{MR1356713} and \cite[Lecture 17]{srikanth_2007} for more details and proofs. Throughout this section, let $R$ be a commutative $A$-algebra. 

\begin{defn}\label{defcommutator}
The commutator of two elements $P, Q \in \operatorname{End}_{A}(R)$ is defined as
\[
[P, Q] = P \cdot Q - Q \cdot P
\]
\end{defn}

\begin{defn}
Let $D^0_{A}(R)$ be the set of maps in $\End_{A}(R)$ given by multiplication by an element of $R$. By identifying an endomorphism $\varphi$ with $\varphi(1)$, it is clear that $D_{A}^0(R)$ and  $R$ are isomorphic
as rings. For $i>0$, define the set of differential operators inductively by letting
\[
D_{A}^i(R) := \{\partial \in \End_{A}(R) \mid [\partial,a] \in D_{A}^{i-1}(R) \text{ for all } a \in R\}.
\]
Then the ring of differential operators on $R$ is 
\[
  D_{A}(R) := \bigcup_{i\geq 0} D_{A}^i(R).
\]
When $A$ is a field and $R$ is a polynomial ring over $A$, we write $D(R)$ to mean $D_A(R)$.
\end{defn}

\begin{defn}
\label{defoperator}
We say a differential operator $\partial \in D_A(R)$ has order $i$ if $\partial \in D_A^i(R) \setminus D_A^{i - 1}(R)$.
\end{defn}

This inductive description of differential operators can appear abstract. However, for many of our results in this paper, we focus on polynomial rings. In the case that $R$ is a polynomial ring over a field $\kk$ of characterstic $0$, the notion of differential operator agrees exactly with the more familiar notion of partial derivative: 

\begin{prop} \cite{MR1356713} \label{Weyl}
Let $R = \kk[x_1, \dots, x_n]$ where $\kk$ is a field of characteristic $0$.  Then $D(R)$ is the \emph{Weyl algebra} $\kk[x_1,\dots,x_n]\langle \partial_1, \dots, \partial_n \rangle$ where $\partial_i = \frac{\partial}{\partial x_i}$. 
\end{prop}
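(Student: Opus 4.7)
The plan is to prove both inclusions of the claimed equality $D(R) = W$, where $W = \kk[x_1, \ldots, x_n]\langle \partial_1, \ldots, \partial_n\rangle$.

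For the easier direction $W \subseteq D(R)$, I would first show $\partial_j \in D^1(R)$: the product rule gives $[\partial_j, f](g) = \partial_j(fg) - f\partial_j(g) = \partial_j(f)\cdot g$, so $[\partial_j, f]$ acts as multiplication by the polynomial $\partial_j(f) \in R = D^0(R)$. An induction on $|\alpha|$ using closure of $D(R)$ under composition, together with the identity $[PQ, f] = P[Q,f] + [P,f]Q$, then gives $\partial^\alpha \in D^{|\alpha|}(R)$. Combined with $R = D^0(R) \subseteq D(R)$, this shows the entire Weyl algebra lies in $D(R)$.

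For the harder direction $D(R) \subseteq W$, I would induct on the order $i$. The base case is $D^0(R) = R \subseteq W$. For the inductive step, take $\partial \in D^i(R)$. Each commutator $[\partial, x_j]$ lies in $D^{i-1}(R)$ and hence in $W$ by the inductive hypothesis, so one can write $[\partial, x_j] = \sum_{|\beta| \le i-1} g_{\beta,j} \partial^\beta$ with $g_{\beta,j} \in R$. Using the identity $[\partial^\alpha, x_j] = \alpha_j \partial^{\alpha - e_j}$ (a direct consequence of $[\partial_j, x_k] = \delta_{jk}$), I would then construct
\[
  \partial_W \,:=\, \sum_{\substack{|\alpha|\le i \\ \alpha \ne 0}} f_\alpha \partial^\alpha, \qquad f_\alpha \,:=\, \frac{g_{\alpha - e_j,\,j}}{\alpha_j} \text{ for any } j \text{ with } \alpha_j > 0,
\]
an element of $W$ which, by direct computation, satisfies $[\partial_W, x_j] = [\partial, x_j]$ for every $j$. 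Consequently $\partial - \partial_W$ commutes with each $x_j$, and since the $x_j$ generate $R$ as a $\kk$-algebra and $\partial - \partial_W$ is $\kk$-linear, it commutes with all of $R$. Any such endomorphism is $R$-linear, hence multiplication by its value at $1 \in R$, so $\partial - \partial_W \in R \subseteq W$ and the induction closes.

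The main technical hurdles are twofold. First, one must verify that $f_\alpha$ is well-defined, independent of the choice of index $j$; this consistency is forced by the Jacobi identity $[[\partial, x_j], x_k] = [[\partial, x_k], x_j]$, which, after expanding both sides in the $\partial^\gamma$-basis and comparing coefficients, yields precisely $\alpha_k \, g_{\alpha - e_j, j} = \alpha_j \, g_{\alpha - e_k, k}$. The characteristic-zero hypothesis enters essentially here, since one must divide by $\alpha_j$. Second, one needs the preliminary lemma that the monomial operators $\partial^\alpha$ are $R$-linearly independent in $\End_\kk(R)$, so that the normal form of an element of $W$ is unambiguous; this is verified by induction on $|\beta|$ using the formula $\partial^\alpha(x^\beta) = \tfrac{\beta!}{(\beta-\alpha)!}\, x^{\beta - \alpha}$ when $\alpha \le \beta$ and $0$ otherwise.
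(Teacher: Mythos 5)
The paper offers no proof of this proposition, citing it to \cite{MR1356713} (Coutinho's book); so there is no ``paper's own proof'' to compare against. Your argument is a correct, self-contained proof and is essentially the standard one that appears in that reference: the easy inclusion $W \subseteq D(R)$ via closure of $D(R)$ under composition and the product rule, and the hard inclusion $D(R) \subseteq W$ by induction on the order, subtracting off a Weyl-algebra element that realizes the same commutators with the variables. The computations $[\partial^\alpha, x_j] = \alpha_j\partial^{\alpha - e_j}$, the use of the Jacobi identity to verify that your $f_\alpha$ is well-defined, and your identification of where characteristic zero enters (division by $\alpha_j$) are all correct, as is the preliminary $R$-linear independence of the monomial operators.

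One small point you should make explicit: to write $[\partial, x_j] = \sum_{|\beta| \le i-1} g_{\beta,j}\,\partial^\beta$ you need not merely that $[\partial, x_j]$ lies in $W$, but that it lies in $W$ with $\partial$-degree bounded by $i-1$. This is automatic if you phrase the inductive hypothesis as ``every element of $D^{m}(R)$ lies in $W_{\le m} := \{\sum_{|\alpha| \le m} f_\alpha\partial^\alpha\}$'' rather than merely ``$D^m(R) \subseteq W$''; the statement you actually prove in the inductive step (namely that $\partial \in W_{\le i}$, since $\partial_W \in W_{\le i}$ and $\partial - \partial_W \in R = W_{\le 0}$) is exactly this stronger one, so the fix is just a matter of saying so. With that caveat the argument is complete.
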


\begin{rmk}
The situation in positive characteristic is more subtle. If $R = \kk [x_1, \ldots, x_n]$, where $\kk$ is a field of characteristic $p>0$, then $D(R)$ includes operators of the form $\frac{1}{p^e!} \frac{\partial^{p^e}}{\partial x_i^{p^e}}$, for all $i$ and all nonnegative integers $e$; \emph{cf.~} \cite[Exercise 17.5]{srikanth_2007}. These are called \emph{divided power} operators and are not contained in the subring of $D(R)$ generated by operators of lower order. In particular, $D(R)$ is not a finitely generated algebra in this case.
\end{rmk}

We begin our study of the differential power by writing the following definition.

\begin{defn}[Definition 2.2, \cite{MR3779569}]\label{powerdef}
Let $R$ be an $A$-algebra and $I \subseteq R$ be an ideal. Define the $n^{\mathrm{th}}$ differential power of $I$ to be
\[
\diffpowerA{I}{n} = \{r \in R \mid \partial(r) \in I \ \text{for all} \ \partial \in D_A^{n-1}(R) \}
\]
When $A$ is a field and $R$ is a polynomial ring over $A$, we write $\diffpower{I}{n}$ to mean $\diffpowerA{I}{n}$.
\end{defn}

The differential power of an ideal is another ideal \cite[Proposition 3.8]{BRENNER2019106843}.  Futher, from the definition above, we can immediately see that $\diffpowerA{I}{n}\subseteq\diffpowerA{I}{m}$ for all $n \geq m$. Note that differential powers have the following property, which is useful when considering decompositions of ideals.

\begin{lemma}[Exercise 2.13 \cite{MR3779569}]\label{exercise2.13}
Let $\{I_\alpha\}_{\alpha\in S}$ be an indexed family of ideals. Then,
$$\bigcap_{\alpha\in S}\diffpowerA{I_\alpha}{n} =  \diffpowerA{\left(\bigcap_{\alpha\in S} I_\alpha \right)}{n}$$
for every positive integer $n$.
\end{lemma}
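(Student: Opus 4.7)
The plan is to establish the equality by double containment, with both directions following directly from the definition of differential power together with the fact that the universal quantifier ``for all $\partial \in D_A^{n-1}(R)$'' commutes with set-theoretic intersection.

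For the $\subseteq$ direction, I would take $r \in \bigcap_{\alpha \in S} \diffpowerA{I_\alpha}{n}$ and observe that by Definition \ref{powerdef}, for every $\alpha \in S$ and every $\partial \in D_A^{n-1}(R)$ we have $\partial(r) \in I_\alpha$. Fixing $\partial$ and varying $\alpha$ then gives $\partial(r) \in \bigcap_\alpha I_\alpha$, so $r$ satisfies the defining condition of $\diffpowerA{(\bigcap_\alpha I_\alpha)}{n}$.

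For the $\supseteq$ direction, suppose $r \in \diffpowerA{(\bigcap_\alpha I_\alpha)}{n}$. Then for every $\partial \in D_A^{n-1}(R)$, $\partial(r) \in \bigcap_\alpha I_\alpha$, and hence $\partial(r) \in I_\alpha$ for each fixed $\alpha$. Thus $r \in \diffpowerA{I_\alpha}{n}$ for every $\alpha$, which places $r$ in the intersection.

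There is no real obstacle here: the entire content of the lemma is the elementary logical equivalence ``$(\forall \alpha)(\forall \partial)\, \partial(r) \in I_\alpha \iff (\forall \partial)(\forall \alpha)\, \partial(r) \in I_\alpha \iff (\forall \partial)\, \partial(r) \in \bigcap_\alpha I_\alpha$.'' No properties of the ring $A$, the $A$-algebra $R$, or the differential operators themselves beyond $A$-linearity (which is already built into $D_A^{n-1}(R)$) are required, and the index set $S$ may be arbitrary.
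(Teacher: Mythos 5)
Your proof is correct and complete. The paper itself does not supply a proof of Lemma \ref{exercise2.13} --- it is cited as an exercise from \cite{MR3779569} --- so there is no in-text argument to compare against, but the double-containment you give is exactly the natural unwinding of Definition \ref{powerdef}: the condition $\partial(r) \in I$ quantified over $\partial \in D_A^{n-1}(R)$ commutes with an arbitrary intersection over $\alpha \in S$, since both are universal quantifiers. Your remark that no hypothesis on $A$, $R$, or the size of $S$ is needed is accurate and worth noting, as it justifies the unrestricted use of the lemma later (e.g.\ in the proof of Lemma \ref{diffclosureintersection}).
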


In Sections \ref{sec:monomialIdeals}, \ref{sec:principal} and \ref{sec:containment}, we work mostly in the setting when $R$ is a polynomial ring over a field. To determine if an element is in $\diffpower{I}{n}$, instead of checking all differential operators of a given order, it suffices to check only the differential operators of the form    $x_1^{\alpha_1} \dots x_d^{\alpha_d} \partial_1^{\beta_1} \dots \partial_d^{\beta_d}$, for all collections $\alpha_i, \beta_i \in \mathbb{Z}_{\geq 0}$. To simplify notation, for $\alpha \in \mathbb{Z}_{\geq 0}^d$, we will denote $x^\alpha \coloneqq x_1^{\alpha_1} \dots x_d^{\alpha_d}$ (where we note that some $\alpha_i$ may be $0$).  Similarly, we will denote $\partial^{\alpha} \coloneqq \partial_1^{\alpha_1} \dots \partial_d^{\alpha_d}$ where $\partial_i \coloneqq  \frac{\partial}{\partial x_i}$.  For any $\alpha \in \mathbb{Z}_{\geq 0}^d$, we will write $\supp(\alpha)$ for the \emph{support} of $\alpha$, namely $\supp{\alpha} = \{ i \mid \alpha_i \neq 0\}$.
For $\alpha,\beta \in \mathbb{Z}_{\geq 0}^d$, we will write $\alpha + \beta$ for the usual component-wise sum of tuples, namely $(\alpha + \beta)_i = \alpha_i + \beta_i$.  We will also write $\alpha\beta$ for the component-wise product, namely $(\alpha\beta)_i = \alpha_i\beta_i$, and finally, for  for $\alpha = (\alpha_1, \ldots, \alpha_d) \in \mathbb{Z}_{\geq 0}^d$, we write $|\alpha|$ to mean $\sum_{i=1}^d \alpha_i$.

In particular, we will use the following:

\begin{prop} [\cite{MR1356713} Chapter 1, Proposition 2.1] \label{weylalggens}
Let $R = \kk[x_1, \ldots, x_d]$ be a polynomial ring over a field $\kk$ of characteristic $0$. Then $D(R)$ is generated as a vector space over $\kk$ by
\[
\{ x^\alpha \partial^\beta \mid \alpha, \beta \in \mathbb{Z}_{\geq 0}^d\}
\]
where $x^\alpha\partial^\beta$ denotes $x_1^{\alpha_1}\cdots x_d^{\alpha_d}\frac{\partial^{\beta_1}}{\partial x_1^{\beta_1}}\cdots\frac{\partial^{\beta_d}}{\partial x_1^{\beta_d}}$
\end{prop}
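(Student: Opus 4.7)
The plan is to leverage Proposition \ref{Weyl}, which already identifies $D(R)$ with the Weyl algebra $\kk[x_1,\dots,x_d]\langle \partial_1,\dots,\partial_d\rangle$. Since this algebra is generated as an associative $\kk$-algebra by the $2d$ elements $x_1,\dots,x_d,\partial_1,\dots,\partial_d$, it is spanned as a $\kk$-vector space by all finite products (words) in these generators. So the content of the proposition is that every such word can be rewritten as a $\kk$-linear combination of the ``normal-ordered'' monomials $x^\alpha\partial^\beta$.

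To prove this, I would use the three commutation relations $[x_i,x_j]=0$, $[\partial_i,\partial_j]=0$, and $[\partial_i,x_j]=\delta_{ij}$ (the last following from the Leibniz rule applied to polynomials). The first two let one reorder all $x$-letters among themselves and all $\partial$-letters among themselves. The third, rewritten as $\partial_i x_j = x_j\partial_i + \delta_{ij}$, lets one push a $\partial$ past an $x$ at the cost of an extra term that is a strictly shorter word in the generators.

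The induction proceeds on a lexicographic pair $(\ell, \mathrm{inv})$, where $\ell$ is the length of the word and $\mathrm{inv}$ is the number of ``inversions'', i.e.\ pairs $(i<j)$ with the $i$th letter a $\partial$ and the $j$th letter an $x$. When $\mathrm{inv}=0$ the word is already of the form $x^{\alpha}\partial^{\beta}$, up to the harmless reorderings within the $x$-block or $\partial$-block. Otherwise, pick any adjacent pair $\partial_i x_j$ in the word and apply the swap: this produces one word of the same length with one fewer inversion, plus (if $i=j$) one word of length $\ell-2$. Both pieces are handled by the inductive hypothesis, and the induction terminates since both parameters drop.

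The only potential obstacle is making sure the induction is well-founded, which is why I would order by $(\ell,\mathrm{inv})$ rather than inversions alone; apart from this bookkeeping, the argument is a routine straightening procedure and no deep ingredient beyond Proposition \ref{Weyl} and the Leibniz rule is needed.
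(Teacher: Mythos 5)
The paper does not give its own proof of this proposition; it is quoted from Coutinho's \emph{A Primer of Algebraic D-modules} (the cited reference), where the spanning statement is established precisely by the normal-ordering argument you describe. Your proof is correct: starting from Proposition~\ref{Weyl}, which identifies $D(R)$ with the associative $\kk$-algebra generated by $x_1,\dots,x_d,\partial_1,\dots,\partial_d$, you reduce arbitrary words to linear combinations of $x^\alpha\partial^\beta$ using $[\partial_i,x_j]=\delta_{ij}$ together with the two commutativity relations, and your induction on the lexicographic pair $(\ell,\mathrm{inv})$ is well-founded for exactly the reason you give (the swap preserves length while lowering the inversion count by one, and the correction term, when $i=j$, drops the length by two). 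The only thing worth being explicit about is that the proposition claims only spanning, not independence, so this straightening argument is all that is required.
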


From this, we can deduce the following corollary, which will be especially useful in computing differential powers.

\begin{cor}\label{differentiallemma}
Let $R = \kk[x_1, \ldots, x_d]$ be a polynomial ring over a field $\kk$ of characteristic $0$.  Then $y \in \diffpower{I}{n}$ if for every $\alpha \in \mathbb{Z}_{\geq 0}^d$ such that $|\alpha| < n$, we have $\partial^\alpha(y) \in I$.
\end{cor}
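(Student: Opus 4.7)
The plan is to use Proposition \ref{weylalggens} to reduce checking membership in $\diffpower{I}{n}$ from the full set $D^{n-1}(R)$ to the restricted family of operators $x^\alpha\partial^\beta$, and then exploit the fact that $I$ is an ideal to absorb the monomial prefactors $x^\alpha$.

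First I would observe that by Proposition \ref{weylalggens}, every element of $D(R)$ is a $\kk$-linear combination of operators of the form $x^\alpha \partial^\beta$ with $\alpha,\beta \in \ZZ_{\geq 0}^d$. Since the order of $x^\alpha\partial^\beta$ is exactly $|\beta|$ (multiplication by $x^\alpha$ is order $0$, and each $\partial_i$ raises the order by one), the subspace $D^{n-1}(R)$ is spanned as a $\kk$-vector space by those $x^\alpha\partial^\beta$ with $|\beta| \leq n-1$. Because $\diffpower{I}{n}$ is defined by a linear condition (namely $\partial(y) \in I$), to verify $y \in \diffpower{I}{n}$ it suffices to check the condition on the spanning operators $x^\alpha\partial^\beta$ with $|\beta| < n$.

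Next I would carry out the one-line computation: for any such spanning operator,
\[
(x^\alpha\partial^\beta)(y) \;=\; x^\alpha \cdot \partial^\beta(y),
\]
since multiplication by $x^\alpha$ is applied after the differential operator $\partial^\beta$. By the hypothesis, $\partial^\beta(y) \in I$ because $|\beta| < n$, and since $I$ is an ideal, multiplying by $x^\alpha$ keeps the result in $I$. Hence every spanning operator of order at most $n-1$ sends $y$ into $I$, and so does every $\kk$-linear combination of them; therefore every $\partial \in D^{n-1}(R)$ satisfies $\partial(y) \in I$, which is exactly the definition of $y \in \diffpower{I}{n}$.

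There is no real obstacle here; the proof is essentially bookkeeping once one accepts that $D^{n-1}(R)$ has the claimed spanning set. The only subtlety worth mentioning is the identification of the order filtration on $D(R)$ with the $|\beta|$-grading of the Weyl-algebra monomials, which one could justify either by an inductive argument from Definition \ref{defoperator} (computing $[x^\alpha\partial^\beta, a]$ for $a \in R$) or by citing the same source as Proposition \ref{weylalggens}.
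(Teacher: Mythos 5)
Your proof is correct and follows essentially the same approach as the paper's: both invoke Proposition~\ref{weylalggens} to express an arbitrary operator of order less than $n$ as an $R$-linear (equivalently, a $\kk$-linear combination of $x^\alpha\partial^\beta$) combination of pure partials of order less than $n$, then conclude using the ideal property of $I$. Your added remark about identifying the order filtration on $D(R)$ with the $|\beta|$-grading is a point the paper silently takes for granted, so including it is a small improvement in transparency rather than a genuine departure.
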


\begin{proof}
Let $\partial \in D^m(R)$ where $m < n$.  By Proposition \ref{weylalggens}, we can write $\partial = \sum_i r_i \partial^{\alpha_i}$ for some $r_i \in R$ and $\alpha_i \in \mathbb{Z}_{\geq 0}^d, |\alpha_i| < n$.
Then
$
\partial(y) = \left(\sum_i r_i \partial^{\alpha_i}\right)y = \sum_i r_i \left(\partial^{\alpha_i}(y)\right) \in I
$
since for each $i$, $\partial^{\alpha_i}(y) \in I$.  Thus $y\in \diffpower{I}{n}$.
\end{proof}

In the characteristic zero case, we also have the following result on multiplying differential powers.

\begin{lemma}\label{containmentdiffpower}
Let $R = \kk[x_1, \ldots, x_d]$ be a polynomial ring where $\kk$ is a field, and let $I \subseteq R$ be an ideal. If $\kk$ has characteristic zero, $\diffpower{I}{n}\cdot \diffpower{I}{m}\subseteq \diffpower{I}{m+n}$.
\end{lemma}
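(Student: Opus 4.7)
The plan is to reduce the containment to a statement about generators of the product ideal and then verify it via a generalized Leibniz rule. Since $\diffpower{I}{n}\cdot \diffpower{I}{m}$ is generated as an ideal by products $fg$ with $f \in \diffpower{I}{n}$ and $g \in \diffpower{I}{m}$, it suffices to show that any such product $fg$ lies in $\diffpower{I}{m+n}$. By Corollary \ref{differentiallemma}, this in turn reduces to verifying that $\partial^\alpha(fg) \in I$ for every $\alpha \in \mathbb{Z}_{\geq 0}^d$ with $|\alpha| < m+n$.

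The key computational tool is the multivariable Leibniz rule, which in characteristic zero gives
\[
\partial^\alpha(fg) = \sum_{\beta \leq \alpha} \binom{\alpha}{\beta} \partial^\beta(f)\, \partial^{\alpha - \beta}(g),
\]
where the sum runs over $\beta \in \mathbb{Z}_{\geq 0}^d$ with $\beta_i \leq \alpha_i$ for each $i$, and $\binom{\alpha}{\beta} = \prod_i \binom{\alpha_i}{\beta_i}$. This is standard and follows by iterating the one-variable product rule.

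I would then finish by a pigeonhole argument on multi-indices: for any fixed $\alpha$ with $|\alpha| < m+n$ and any $\beta \leq \alpha$, we cannot simultaneously have $|\beta| \geq n$ and $|\alpha - \beta| \geq m$, since that would give $|\alpha| = |\beta| + |\alpha - \beta| \geq m+n$. Hence for each term in the Leibniz expansion, either $|\beta| < n$, in which case $\partial^\beta(f) \in I$ because $f \in \diffpower{I}{n}$, or $|\alpha - \beta| < m$, in which case $\partial^{\alpha - \beta}(g) \in I$ because $g \in \diffpower{I}{m}$. In either case the summand lies in $I$, so $\partial^\alpha(fg) \in I$, completing the argument.

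The main obstacle is essentially bookkeeping: one has to be comfortable with the multi-index Leibniz rule and the definition of $\diffpower{I}{n}$ in terms of differential operators of order strictly less than $n$ (note the off-by-one shift in Definition \ref{powerdef}). Characteristic zero enters only through Proposition \ref{Weyl}/Corollary \ref{differentiallemma}, which lets us restrict attention to the monomial operators $\partial^\alpha$ rather than all of $D^{m+n-1}(R)$; the Leibniz identity itself holds over any ring.
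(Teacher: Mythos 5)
Your proof is correct and follows the same route as the paper: reduce to the monomial operators $\partial^\alpha$ via Corollary \ref{differentiallemma}, expand $\partial^\alpha(fg)$ by the multivariable Leibniz rule, and observe that any $\beta \leq \alpha$ with $|\alpha| < m+n$ must satisfy $|\beta| < n$ or $|\alpha-\beta| < m$. No gaps.
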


\begin{proof}
Let $s \in \diffpower{I}{n}$ and $t \in \diffpower{I}{m}$.  We want to show that $st \in \diffpower{I}{n+m}$.   Let $\alpha \in \mathbb{Z}_{\geq 0}^d$ such that $|\alpha| < m +n$.  By Corollary \ref{differentiallemma}, it suffices to check that $\partial^{\alpha}(st) \in I$.
By the General Leibniz rule,

$$\partial^{\alpha}(st) = \sum_{\omega \colon \omega \le \alpha} \binom{\alpha}{\omega} \partial^{\omega}(s) \partial^{\alpha - \omega} (t)$$

where

\[
\binom{\alpha}{\beta} = \binom{\alpha_1}{\beta_1} \binom{\alpha_2}{\beta_2} \cdots \binom{\alpha_d}{\beta_d}
\]
and we say $\omega \leq \alpha$ for $\omega \in \mathbb{Z}_{\geq 0}^d$ if $\omega_i \leq \alpha_i$ for all $i$ \cite{olver_2000}.

Note, we are only considering the order of the differential operators. In this case, we notice that if either $\partial^{\omega}(s)$ is in $I$ or $\partial^{\alpha - \omega}(t)$ is in $I$ for all $\omega \leq \alpha$ then $\partial^\alpha(st)\in I$. Equivalently, we need to show $|\alpha - \omega| \leq m - 1$ or $|\omega| \leq n - 1$ for each $\omega \leq \alpha$. Suppose $\omega \in \mathbb{Z}_{\geq 0}^d$ and $\omega \leq \alpha$.  We know $|\alpha - \omega| = |\alpha| - |\omega| \leq m + n - 1 - |\omega|$. If $|\alpha - \omega| > m - 1$, then $|\omega| \leq n - 1$. If $|\omega| > n - 1$, then $|\alpha - \omega| \leq m - 1$. So, either $|\alpha - \omega| \leq m - 1$ or $|\omega| \leq n - 1$. Thus we have shown $\partial_{\omega}(t) \in I$ or $\partial^{\alpha - \omega}(s)\in I$ for all $\omega \leq \alpha$. From this, we find that $\partial^{\alpha}(st) \in I$, so $st \in \diffpower{I}{n+m}$. Thus, $\diffpower{I}{n} \cdot \diffpower{I}{m} \subseteq \diffpower{I}{n+m}$.
\end{proof}

\begin{rmk}
 The conclusion of Lemma \ref{containmentdiffpower} is false in general---see \cite[Example 4.35]{BRENNER2019106843}.
\end{rmk}

\begin{lemma}\label{successivepowers}
If $R$ is a polynomial ring over a field of characteristic zero and $I$ is an ideal of $R$ then $\diffpower{\diffpower{I}{n}}{m} = \diffpower{I}{n + m - 1}$ for all $n, m>0$.
\end{lemma}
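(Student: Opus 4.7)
The plan is to use Corollary \ref{differentiallemma}, which reduces membership in differential powers (over a polynomial ring in characteristic zero) to checking pure partial derivatives $\partial^\alpha$. Note that the corollary gives an ``if and only if'' in this setting, since each $\partial^\alpha$ with $|\alpha| < n$ lies in $D^{n-1}(R)$ by definition. Unpacking Definition \ref{powerdef} and applying the corollary twice, we have
\[
\diffpower{\diffpower{I}{n}}{m} = \bigl\{ r \in R \,\bigm|\, \partial^\gamma\bigl(\partial^\beta(r)\bigr) \in I \text{ for all } \beta,\gamma \in \zdgeq \text{ with } |\beta| \le m-1,\ |\gamma| \le n-1 \bigr\},
\]
while $\diffpower{I}{n+m-1} = \{ r \in R \mid \partial^\alpha(r) \in I \text{ for all } \alpha \in \zdgeq \text{ with } |\alpha| \le n+m-2\}$. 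Since partial derivatives commute, $\partial^\gamma \partial^\beta = \partial^{\beta+\gamma}$, so both conditions are really conditions on $\partial^\alpha(r)$ for various $\alpha$; the game is just to compare which $\alpha$ appear.

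For the containment $\diffpower{I}{n+m-1} \subseteq \diffpower{\diffpower{I}{n}}{m}$, I would take $r$ in the left-hand side and any $\beta,\gamma$ with $|\beta| \le m-1$, $|\gamma| \le n-1$; then $|\beta+\gamma| \le n+m-2$, so $\partial^\gamma(\partial^\beta(r)) = \partial^{\beta+\gamma}(r) \in I$ by hypothesis. This shows $r \in \diffpower{\diffpower{I}{n}}{m}$.

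For the reverse containment, I would take $r \in \diffpower{\diffpower{I}{n}}{m}$ and any $\alpha$ with $|\alpha| \le n+m-2$, and produce a splitting $\alpha = \beta + \gamma$ with $\beta,\gamma \in \zdgeq$, $|\beta| \le m-1$, and $|\gamma| \le n-1$. Such a splitting exists: peel off indices one coordinate at a time to form $\beta$, stopping once $|\beta| = \min(|\alpha|, m-1)$; the remaining $\gamma = \alpha - \beta$ satisfies $|\gamma| = |\alpha| - |\beta| \le (n+m-2) - \min(|\alpha|, m-1) \le n-1$ in either case. Then $\partial^\alpha(r) = \partial^\gamma(\partial^\beta(r)) \in I$ by hypothesis, so $r \in \diffpower{I}{n+m-1}$ by Corollary \ref{differentiallemma}.

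There is no real obstacle here; the only thing to be careful about is the combinatorial splitting of $\alpha$ in the reverse containment, and the fact that commutativity of pure partials is what lets Corollary \ref{differentiallemma} (a statement about basis elements of the Weyl algebra) interact cleanly with composition. Both of these are clean in the characteristic zero polynomial setting.
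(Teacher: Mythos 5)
Your proof is correct and follows essentially the same two-containment strategy as the paper, with the same splitting $\alpha = \beta + \gamma$ in the reverse direction. The only stylistic difference is in the forward containment: the paper works directly with arbitrary differential operators $\delta \in D^{n-1}(R)$, $\eta \in D^{m-1}(R)$ and uses the filtration fact that $\delta\eta \in D^{n+m-2}(R)$, whereas you reduce both sides to pure partials via Corollary~\ref{differentiallemma} up front, so that the composition step is just $\partial^\gamma\partial^\beta = \partial^{\beta+\gamma}$. Your route is slightly more uniform and avoids appealing to subadditivity of order under composition for general operators; both are clean and correct in this setting, and you are right that the ``only if'' direction of Corollary~\ref{differentiallemma} is automatic since $\partial^\alpha \in D^{n-1}(R)$ whenever $|\alpha| < n$.
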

\begin{proof}
Let $r$ be an arbitrary element in $\diffpower{I}{n + m - 1}$. This means that $\partial(r) \in I$ for all differential operators $\partial$ with order less than or equal to $n + m - 2$. Let $\delta$ be a differential operator of order less than or equal to $n- 1$ and let $\eta$ be a differential operator of order less than or equal to $m -1$.  Then $\partial = \delta \cdot \eta$, has order less than $n + m - 2$, so $(\delta \cdot \eta)(r) = \partial(r) \in I$.  Since $\delta$ and $\eta$ were arbitrary, we have $r \in \diffpower{\diffpower{I}{n}}{m}$. 

For the reverse inclusion, let $r \in \diffpower{\diffpower{I}{n}}{m}$. By Corollary  \ref{differentiallemma}, it suffices to show that $\partial^\alpha(r) \in I$ for all $\alpha \in \mathbb{Z}_{\geq 0}^d$ with $|\alpha| \leq n + m - 2$. Given any such $\alpha$, we can write $\alpha = \beta + \gamma$ where $|\beta| \leq n-1$ and $|\gamma| \leq m-1$ (the entries of $\beta$ and $\gamma$ are allowed to be 0). In this case, we have
\[
  \partial^\alpha r = \partial^{\beta}( \partial^{\gamma} r) \in \partial^{\beta} \cdot \diffpower{I}{n} \subseteq I,
\]
as desired. 
\end{proof}

\section{Differential Powers of Monomial Ideals}
\label{sec:monomialIdeals}

In this section, let $R = \kk[x_1, \ldots, x_d]$ where $\kk$ is a field of characteristic 0. The goal of this section is to understand some basic properties of the differential power when the ideal is a monomial ideal. We show that if $I$ is a monomial ideal, then $\diffpower{I}{n}$ is a monomial ideal, and we provide an explicit description of its generators.

\begin{defn} \label{purePowers}
Let $R = \kk[x_1, \ldots, x_d]$ where $\kk$ is a field of characteristic 0 and $I$ an ideal of $R$. We say $I$ is \textit{generated by pure powers} if $I = (x_{i_1}^{\alpha_{i_1}}, \dots, x_{i_m}^{\alpha_{i_m}}) =  (x_i^{\alpha_i} \mid i \in \supp(\alpha))$ for some $\alpha \in \ZZ^d_{\geq 0}$.
\end{defn}

\begin{rmk}
Ideals generated by pure powers are sometimes referred to as ``M-irreducible" ideals in the literature, \emph{cf.~} \cite{monomialsbook}.
\end{rmk}

We recall some facts about monomial ideals.  First, every monomial ideal has a decomposition in terms of ideals which are pure powers of variables.  In particular,

\begin{thm}[Theorem 1.3.1 \cite{MR2724673}]\label{herzoghibi1.3.1}
Let $R = \kk[x_1, \ldots, x_d]$ where $\kk$ is a field of characteristic 0. Let $I$ be a monomial ideal. Then we can write $I = \bigcap_{i = 1}^{n} Q_i$ where each $Q_i$ is generated by pure powers of the variables. Moreover, an irredundant presentation of this form is unique up to reordering of the $Q_i$.
\end{thm}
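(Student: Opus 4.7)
The plan is to prove existence by a concrete splitting algorithm and uniqueness by showing the irreducible components are intrinsically recoverable from the ``staircase'' of monomials not in $I$. Since $R$ is Noetherian, any monomial ideal has finitely many minimal monomial generators, so all the reductions below will terminate.

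For existence, I would rely on the following splitting lemma: if $m = uv$ is a monomial with $\gcd(u,v) = 1$ and neither $u$ nor $v$ equal to $1$, and if $I = (m, m_2, \dots, m_r)$ is a monomial ideal, then
\[
  I = (u, m_2, \dots, m_r) \cap (v, m_2, \dots, m_r).
\]
The nontrivial inclusion follows from the fact that a monomial lies in a monomial ideal iff it is divisible by one of the generators, together with the observation that a monomial divisible by both $u$ and $v$ is divisible by $uv = m$ when $\gcd(u,v)=1$. Applying this lemma repeatedly, each generator can be split off its prime-power factors one at a time. I would define a numerical invariant---for instance, the sum over all minimal monomial generators of $(\text{number of distinct variables appearing} - 1)$---that strictly decreases with each application of the splitting lemma, thereby ensuring termination. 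At termination, every ``generator'' in each intersection involves at most one variable, i.e., each piece is generated by pure powers of variables, establishing the existence of the decomposition.

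For uniqueness of the irredundant decomposition, the strategy is to show that the components $Q_i$ are combinatorially determined by $I$. A monomial ideal $Q = (x_{i_1}^{a_1}, \dots, x_{i_m}^{a_m})$ generated by pure powers is characterized by its ``corner monomial'' $c_Q := x_{i_1}^{a_1-1}\cdots x_{i_m}^{a_m-1}$: we have $c_Q \notin Q$, but $x_{i_j} \cdot c_Q \in Q$ for every $j \in \{i_1,\dots,i_m\}$, and moreover $x_k \cdot c_Q \notin Q$ for $k \notin \{i_1,\dots,i_m\}$ unless the exponent on $x_k$ in $c_Q$ is high enough (but $c_Q$ has exponent zero there, so indeed $x_k c_Q \notin Q$). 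I would then argue that, given an irredundant decomposition $I = \bigcap_i Q_i$, each corner monomial $c_{Q_i}$ lies in $\bigcap_{j \neq i} Q_j \setminus I$ (using irredundancy), and every monomial with the analogous ``maximal outside $I$'' property must arise in this way. This forces the set $\{Q_1, \dots, Q_n\}$ to coincide with the intrinsically-defined set of ideals corresponding to such maximal corners of the staircase diagram $\mathbb{Z}^d_{\geq 0} \setminus \{\alpha : x^\alpha \in I\}$, giving uniqueness up to reordering.

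The main obstacle is the uniqueness half: the existence argument reduces to combinatorial bookkeeping, but uniqueness requires identifying a canonical recipe that recovers the components from $I$ alone. The subtle point is formulating precisely which ``corners'' of the staircase correspond to components---one must correctly account for both the support and the exponents of the corner monomial, and show that distinct corners in the intrinsic recipe yield distinct (and incomparable) $Q_i$'s matching the given decomposition. Once the canonical characterization is in place, irredundancy of the given decomposition rules out spurious matches, and uniqueness follows.
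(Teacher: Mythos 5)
Your existence argument is correct: the splitting lemma holds (the nontrivial inclusion uses that a monomial divisible by both $u$ and $v$ with $\gcd(u,v)=1$ must be divisible by $uv$), and the invariant $\sigma(I)=\sum_{m\in G(I)}\bigl(|\supp(m)|-1\bigr)$ strictly decreases on each piece after every split, since $|\supp(u)|<|\supp(m)|$ when $v\neq 1$; so the recursion terminates and each leaf is generated by pure powers.

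The uniqueness half, however, contains a false step. You assert that for an irredundant decomposition $I=\bigcap_i Q_i$ the corner monomial $c_{Q_i}$ lies in $\bigcap_{j\neq i}Q_j$, ``using irredundancy.'' This is not what irredundancy gives you. Take $I=(xy)=(x)\cap(y)$ in $\kk[x,y]$: the decomposition is irredundant, but $c_{(x)}=x^{1-1}=1\notin(y)$. Irredundancy only guarantees that \emph{some} monomial lies in $\bigcap_{j\neq i}Q_j\setminus Q_i$; when the $Q_j$ have different supports, $c_{Q_i}$ itself typically does not. Your ``maximal corners of the staircase'' recipe recovers the components only under extra finiteness (e.g.\ $I$ is $\mathfrak{m}$-primary, so the staircase is bounded) or after a reduction you have not spelled out. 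The standard repair is to prove uniqueness via meet-irreducibility of pure-power ideals among monomial ideals: if $J_1\cap J_2\subseteq Q$ with $Q$ generated by pure powers and neither $J_i\subseteq Q$, choose monomials $u_i\in J_i\setminus Q$; then $\lcm(u_1,u_2)\in J_1\cap J_2\subseteq Q$, so some pure-power generator $x_k^{a_k}$ of $Q$ divides $\lcm(u_1,u_2)$, hence divides $u_1$ or $u_2$ (its support being a single variable), a contradiction. Meet-irreducibility together with a short exchange argument on the two irredundant decompositions yields uniqueness up to reordering.
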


In the particular case of square free monomial ideals, we see that this decomposition consists of ideals generated by variables.

\begin{cor}\label{1.3.1variant}
Let $R = \kk[x_1, \ldots, x_d]$ where $\kk$ is a field of characteristic 0. If $I$ is a square-free monomial ideal, then $I = \bigcap_{i = 1}^{n} Q_i$ where each $Q_i$ is of the form $(x_{i_1}, \ldots, x_{i_m})$.
\end{cor}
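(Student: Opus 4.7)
The plan is to reduce to Theorem~\ref{herzoghibi1.3.1} by taking radicals. I would first apply that theorem to write $I = \bigcap_{i=1}^n Q_i$, where each $Q_i = (x_{i_1}^{\alpha_{i_1}}, \ldots, x_{i_m}^{\alpha_{i_m}})$ is generated by pure powers of variables. The essential observation is that $\sqrt{Q_i} = (x_{i_1}, \ldots, x_{i_m})$, which is precisely an ideal of the form demanded by the corollary.

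Next, using that radical commutes with finite intersections, I would write
\[
\sqrt{I} \;=\; \sqrt{\bigcap_{i=1}^n Q_i} \;=\; \bigcap_{i=1}^n \sqrt{Q_i},
\]
so the right-hand side is already an intersection of ideals of the desired shape. It then suffices to show that $I = \sqrt{I}$ whenever $I$ is a square-free monomial ideal, at which point combining these observations yields the conclusion.

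The main work, and the only real obstacle, is verifying that a square-free monomial ideal is radical. Since the radical of any monomial ideal is itself a monomial ideal, it suffices to check on monomials: if $f$ is a monomial with $f^k \in I$, then some monomial generator $g$ of $I$ divides $f^k$. Writing $g = x_{j_1} \cdots x_{j_s}$ (square-free by assumption), each $x_{j_r}$ divides $f^k$ and hence divides $f$, so $g \mid f$ and thus $f \in I$. This would complete the argument; no careful case analysis on the $\alpha_{i_j}$'s is needed, since collapsing exponents to $1$ is accomplished automatically by the radical.
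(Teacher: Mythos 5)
Your argument is correct, and since the paper gives no proof of this corollary (it is stated as an immediate consequence of Theorem~\ref{herzoghibi1.3.1}), there is no competing argument to compare against. Your route—decompose via Theorem~\ref{herzoghibi1.3.1}, pass to radicals on both sides, and observe that a square-free monomial ideal equals its own radical—is the natural one, and your verification that square-free monomial ideals are radical is sound: a monomial is in a monomial ideal if and only if it is divisible by a generator, and a square-free monomial dividing $f^k$ must already divide $f$. One small point worth making explicit: when you reduce to checking on monomials, the relevant fact is that $\sqrt{I}$ is itself a monomial ideal, and a monomial ideal is generated by the monomials it contains, so it suffices to show every monomial of $\sqrt{I}$ lies in $I$. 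You invoke this implicitly; it is standard, but it is the hinge of the reduction. Also note that your new decomposition $I = \bigcap_i \sqrt{Q_i}$ need not be irredundant even if $\bigcap_i Q_i$ was, but the corollary does not demand irredundancy, so this causes no trouble.
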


Furthermore, any finite intersection of monomial ideals is still a monomial ideal, and we can explicitly describe its generators.

\begin{defn}
Let $R = \kk[x_1, \ldots, x_d]$ where $\kk$ is a field of characteristic 0. Let $I$ be an ideal of $R$. The set of minimal generators of $I$ is denoted $G(I)$.
\end{defn}

\begin{prop}[Proposition 1.2.1 \cite{MR2724673}]\label{herzoghibi1.2.1}
Let $R = \kk[x_1, \ldots, x_d]$ where $\kk$ is a field of characteristic 0. Let $I$ and $J$ be monomial ideals. Then $I\cap J$ is a monomial ideal, and $\{\mathrm{lcm}(u,v)\colon u\in G(I),v\in G(J)\}$ is a set of generators of $I\cap J$.
\end{prop}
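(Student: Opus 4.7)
The plan is to prove the two assertions in sequence: first that $I\cap J$ is a monomial ideal, and then that the given set of lcms generates it.

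For the first assertion, I would rely on the standard characterization of monomial ideals: a polynomial $f = \sum_\alpha c_\alpha x^\alpha \in R$ belongs to a monomial ideal $K$ if and only if every monomial $x^\alpha$ appearing in $f$ with nonzero coefficient $c_\alpha$ belongs to $K$. Taking $f \in I\cap J$, applying this to $I$ shows each $x^\alpha$ with $c_\alpha\neq 0$ lies in $I$, and applying it to $J$ shows the same for $J$. Hence each such $x^\alpha$ lies in $I\cap J$, so $I\cap J$ is generated by monomials.

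For the second assertion, let $L$ denote the ideal generated by $\{\mathrm{lcm}(u,v) : u \in G(I), v \in G(J)\}$. The containment $L \subseteq I \cap J$ is immediate: for any $u\in G(I), v\in G(J)$, the monomial $\mathrm{lcm}(u,v)$ is divisible by $u$ (hence in $I$) and by $v$ (hence in $J$). For the reverse containment, by the first part it suffices to check that every monomial $m \in I \cap J$ lies in $L$. Since $m \in I$ and $I$ is monomial, some $u \in G(I)$ divides $m$; similarly some $v \in G(J)$ divides $m$. But any common multiple of $u$ and $v$ is divisible by $\mathrm{lcm}(u,v)$, so $\mathrm{lcm}(u,v)$ divides $m$, giving $m \in L$.

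There is no real obstacle here; the whole argument is a direct application of the defining property that membership in a monomial ideal can be checked monomial-by-monomial, together with the elementary fact about lcms of divisors. The only point worth stating carefully is the ``monomial-by-monomial'' criterion, which would either be cited from the standard reference \cite{MR2724673} or justified in one line by noting that a monomial ideal is spanned as a $\kk$-vector space by the monomials it contains.
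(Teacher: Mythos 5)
Your proof is correct, and the paper itself gives no proof of this proposition—it is simply cited from Herzog and Hibi. Your argument matches the standard textbook argument for this fact: reduce membership in a monomial ideal to monomial-by-monomial checking, then use that a monomial lies in a monomial ideal exactly when some minimal generator divides it, together with the universal property of the lcm.
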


\begin{cor}
Let $R = \kk[x_1, \ldots, x_d]$ where $\kk$ is a field of characteristic 0. If $I_i$ is a monomial ideal and $n$ is a finite positive integer, then $\bigcap_{i = 1}^n I_i$ is monomial ideal.
\end{cor}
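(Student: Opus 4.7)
The plan is to prove this by straightforward induction on $n$, using Proposition \ref{herzoghibi1.2.1} as the key input. The statement for $n=2$ is precisely Proposition \ref{herzoghibi1.2.1}, so the only work is to bootstrap that pairwise statement to an arbitrary finite intersection.

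For the base case $n=1$ the claim is trivial, since $\bigcap_{i=1}^1 I_i = I_1$ is assumed to be a monomial ideal. For the inductive step, assume that any intersection of $n$ monomial ideals is again a monomial ideal, and consider $\bigcap_{i=1}^{n+1} I_i$. Writing this as
\[
  \bigcap_{i=1}^{n+1} I_i \;=\; \left(\bigcap_{i=1}^{n} I_i\right) \cap I_{n+1},
\]
the inductive hypothesis says the first factor is a monomial ideal, and Proposition \ref{herzoghibi1.2.1} then says the intersection of that monomial ideal with the monomial ideal $I_{n+1}$ is a monomial ideal.

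There is no real obstacle here; this is a routine finite induction whose only ingredient is the already-established pairwise case. If desired, one could also be slightly more explicit and note that iterating the generator description from Proposition \ref{herzoghibi1.2.1} shows that $\bigcap_{i=1}^n I_i$ is generated by $\{\mathrm{lcm}(u_1, \dots, u_n) : u_i \in G(I_i)\}$, but this level of detail is not needed for the corollary as stated.
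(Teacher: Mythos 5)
Your proof is correct and matches the intended argument: the paper states this corollary without proof, precisely because it follows by the routine induction on $n$ from Proposition \ref{herzoghibi1.2.1} that you carry out. Your optional remark about the explicit generating set $\{\lcm(u_1,\dots,u_n) : u_i \in G(I_i)\}$ is also accurate and is the natural refinement of the pairwise statement.
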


We will start by studying differential powers of ideals generated by pure powers of variables, which will allow us to compute differential powers of arbitrary monomial ideals. Consider $\alpha \in \mathbb{Z}_{\geq 0}^d$ and the ideal $I = (x_i^{\alpha_i} \mid i \in \supp(\alpha)) \subset R= \kk[x_1, \dots, x_d]$. We will give an explicit formula for the generators of $\diffpower{I}{n}$ for any positive integer $n$.  First, we show that if $I$ is a monomial ideal, then $\diffpower{I}{n}$ is a monomial ideal.

\begin{prop}\label{diffpowerismonomial}
Suppose $I$ is a monomial ideal, and $n \in \mathbb{N}$.  Then $\diffpower{I}{n}$ is also a monomial ideal
\end{prop}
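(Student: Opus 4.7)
The plan is to show that every polynomial in $\diffpower{I}{n}$ is a $\kk$-linear combination of monomials which are themselves in $\diffpower{I}{n}$, which is exactly what it means for $\diffpower{I}{n}$ to be a monomial ideal.

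First I would take an arbitrary $f \in \diffpower{I}{n}$ and write it as $f = \sum_{\beta} c_\beta x^\beta$ with $c_\beta \in \kk$, and aim to show that $x^\beta \in \diffpower{I}{n}$ whenever $c_\beta \neq 0$. By Corollary \ref{differentiallemma}, this reduces to checking that $\partial^\alpha(x^\beta) \in I$ for every $\alpha \in \zdgeq$ with $|\alpha| < n$. The key observation is the standard formula
\[
\partial^\alpha(x^\beta) = \begin{cases} \dfrac{\beta!}{(\beta-\alpha)!} \, x^{\beta-\alpha} & \text{if } \alpha \leq \beta, \\ 0 & \text{otherwise,}\end{cases}
\]
where $\beta! = \beta_1! \cdots \beta_d!$. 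Since $\kk$ has characteristic $0$, the coefficient $\beta!/(\beta-\alpha)!$ is nonzero whenever $\alpha \leq \beta$.

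Next I would apply $\partial^\alpha$ to $f$. Since $\partial^\alpha$ is $\kk$-linear, we get
\[
\partial^\alpha(f) = \sum_{\beta \geq \alpha} c_\beta \, \dfrac{\beta!}{(\beta-\alpha)!} \, x^{\beta - \alpha}.
\]
The crucial point is that as $\beta$ ranges over distinct multi-indices with $\beta \geq \alpha$, the shifted multi-indices $\beta - \alpha$ are also distinct; hence $\partial^\alpha(f)$ is already displayed as a sum of scalar multiples of distinct monomials, with no cancellation possible. Because $f \in \diffpower{I}{n}$ and $|\alpha| < n$, we have $\partial^\alpha(f) \in I$, and because $I$ is a monomial ideal, membership of a polynomial in $I$ forces each of its monomial terms to lie in $I$. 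Therefore each nonzero term $c_\beta \tfrac{\beta!}{(\beta-\alpha)!} x^{\beta-\alpha}$ belongs to $I$, and since the scalar is nonzero, the monomial $x^{\beta-\alpha} = \frac{(\beta-\alpha)!}{\beta!} \partial^\alpha(x^\beta)$ belongs to $I$, i.e., $\partial^\alpha(x^\beta) \in I$.

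Running this argument over every $\alpha$ with $|\alpha| < n$ shows, via Corollary \ref{differentiallemma}, that $x^\beta \in \diffpower{I}{n}$ for each $\beta$ with $c_\beta \neq 0$, so $\diffpower{I}{n}$ is generated by monomials. The only subtle step is the no-cancellation observation (which lets us pass monomial-by-monomial through the hypothesis $\partial^\alpha(f) \in I$), and the use of characteristic $0$ to ensure the derivative coefficients are units; neither should present a real obstacle.
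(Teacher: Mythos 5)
Your proof is correct and follows essentially the same strategy as the paper's: reduce via Corollary \ref{differentiallemma} to checking $\partial^\alpha(x^\beta) \in I$, observe that the monomials $\partial^\alpha(x^\beta)$ appearing with nonzero coefficient in $\partial^\alpha(f)$ are $\kk$-linearly independent (no cancellation), and conclude using the fact that a polynomial lies in a monomial ideal only if each of its monomial terms does. Your version just spells out the derivative formula and the injectivity of $\beta \mapsto \beta - \alpha$ a bit more explicitly; the paper phrases the same point as linear independence.
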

\begin{proof}
Suppose that $f \in \diffpower{I}{n}$, where $f = \sum_{i=1}^m c_i x^{\alpha_i}$ where $\alpha_i \in \mathbb{Z}^d_{\geq 0}$ and $ 0 \neq c_i \in \kk$ for all $i$. We want to show that $x^{\alpha_i} \in \diffpower{I}{n}$ for all $i$. By Corollary \ref{differentiallemma}, it suffices to show that $\partial^\beta(x^{\alpha_i}) \in I$ for all $\beta \in \zdgeq$ with $|\beta| \leq n - 1$. If $\partial^\beta(x^{\alpha_i}) = 0$ then we're done; otherwise $\partial^\beta(x^{\alpha_i}) = cx^{\alpha_i - \beta}$ for some nonzero $c \in \kk$. In this case, $\partial^\beta (x^{\alpha_i})$ is $\kk$-linearly independent from $cx^{\alpha_j - \beta} $ for all $j \neq i$. In other words, $\partial^\beta(x^{\alpha_i})$ is a monomial appearing in the expansion of $\partial^\beta(f)$ with a non-zero coefficient. As $\partial^\beta(f) \in I$ and $I$ is a monomial ideal, this tells us that $\partial^\beta(x^{\alpha_i}) \in I$, as desired.
\end{proof}

\begin{thm}
\label{monomialformulaupdated}
Let $R = \kk[x_1, \ldots, x_d]$ where $\kk$ is a field of characteristic 0, and let $I = (x_i^{\alpha_i} \mid i \in \supp(\alpha))$, where $\alpha \in \mathbb{Z}_{\geq 0}^d$.  The differential power $\diffpower{I}{n}$ is generated by elements of the form
$x^\gamma = x_1^{\gamma_1}x_2^{\gamma_2}\ldots x_d^{\gamma_d}$ where $\gamma$ satisfies
\begin{enumerate}
    \item \label{comparison} $\gamma_i \geq \alpha_i$ for each $i \in \supp(\gamma)$, and
    \item \label{gammasAndalphas}
$\sum\limits_{\gamma_i \geq \alpha_i} \gamma_i = \sum\limits_{\gamma_i \geq \alpha_i} (\alpha_i-1) + n.$
\end{enumerate}
where the summations are taken over all indices $i$ such that $\gamma_i \geq \alpha_i$.
\end{thm}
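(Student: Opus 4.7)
My plan is to characterize precisely which monomials lie in $\diffpower{I}{n}$ and then extract a generating set of the prescribed form. The key observation is that one can reduce to understanding what a single pure partial derivative does to a single monomial, then run an elementary optimization to find the minimal-order derivative that can escape $I$.

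First, by Proposition \ref{diffpowerismonomial} the ideal $\diffpower{I}{n}$ is monomial, so it suffices to determine which monomials $x^\gamma$ lie in $\diffpower{I}{n}$. By Corollary \ref{differentiallemma}, this reduces to checking that $\partial^\beta(x^\gamma) \in I$ for every $\beta \in \zdgeq$ with $|\beta| < n$. Now $\partial^\beta(x^\gamma) = 0$ unless $\beta \leq \gamma$ componentwise, in which case it equals a nonzero scalar multiple of $x^{\gamma - \beta}$. Since $I = (x_i^{\alpha_i} \mid i \in \supp(\alpha))$, the monomial $x^{\gamma - \beta}$ lies in $I$ if and only if $(\gamma - \beta)_i \geq \alpha_i$ for some $i \in \supp(\alpha)$.

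Taking the contrapositive, $x^\gamma \notin \diffpower{I}{n}$ if and only if there exists $\beta \leq \gamma$ with $|\beta| < n$ such that $(\gamma - \beta)_i < \alpha_i$ for every $i \in \supp(\alpha)$. The next step, which I view as the heart of the argument, is to minimize $|\beta|$ subject to this ``bad'' configuration. For each $i \in \supp(\alpha)$ with $\gamma_i \geq \alpha_i$ we must have $\beta_i \geq \gamma_i - \alpha_i + 1$; for $i \in \supp(\alpha)$ with $\gamma_i < \alpha_i$, taking $\beta_i = 0$ already suffices; and for $i \notin \supp(\alpha)$ there is no constraint, so again $\beta_i = 0$ is optimal. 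Hence the minimum achievable value of $|\beta|$ is
\[
M(\gamma) \;=\; \sum_{\substack{i \in \supp(\alpha) \\ \gamma_i \geq \alpha_i}} (\gamma_i - \alpha_i + 1),
\]
and therefore $x^\gamma \in \diffpower{I}{n}$ if and only if $M(\gamma) \geq n$.

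Finally, to produce a generating set I would argue that every monomial with $M(\gamma) > n$ is a proper multiple of one with $M(\gamma) = n$: pick any $i$ contributing to $M(\gamma)$ and lower $\gamma_i$ by the excess $M(\gamma) - n$ (this stays within the same summation regime provided $\gamma_i - \alpha_i + 1 > M(\gamma) - n$, otherwise iterate on another index). This recovers condition (2). Condition (1) is then the observation that any $\gamma$ with $0 < \gamma_i < \alpha_i$ for some $i \in \supp(\alpha)$ is redundant, since such an $i$ contributes nothing to $M(\gamma)$ and so replacing $\gamma_i$ by $0$ yields a proper divisor satisfying the same numerical condition. The main obstacle is ensuring the minimization in step 3 is tight and handling the edge cases where $\supp(\gamma) \not\subseteq \supp(\alpha)$ cleanly; everything else is bookkeeping on exponent vectors.
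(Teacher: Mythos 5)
Your proposal is correct and follows essentially the same strategy as the paper's: reduce to monomials via Proposition \ref{diffpowerismonomial} and Corollary \ref{differentiallemma}, characterize $x^\gamma \in \diffpower{I}{n}$ in terms of the minimal order of a pure partial derivative that carries $x^\gamma$ outside $I$ (your $M(\gamma)$ is exactly the threshold the paper establishes via pigeonhole in one direction and by exhibiting $\partial^\omega$ with $\omega_i = \gamma_i - \alpha_i + 1$ in the other), and then argue that each qualifying monomial is divisible by one satisfying conditions \eqref{comparison} and \eqref{gammasAndalphas} with equality.

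There is one place you are genuinely more careful than the paper. Your $M(\gamma)$ restricts the sum to $i \in \supp(\alpha)$, since for $\alpha_i = 0$ the optimal choice in the minimization is $\beta_i = 0$. The paper's displayed condition \eqref{gammasAndalphas} sums over \emph{all} $i$ with $\gamma_i \geq \alpha_i$, which, whenever $\supp(\alpha) \subsetneq \{1,\dots,d\}$, picks up spurious contributions: an index with $\alpha_i = 0$ contributes $\gamma_i + 1$ to the intended identity $\sum(\gamma_i - \alpha_i + 1) = n$. As a sanity check, take $R = \kk[x_1,x_2]$, $I = (x_1^2)$ (so $\alpha = (2,0)$) and $\gamma = (3,0)$; the literal condition \eqref{gammasAndalphas} gives $3 + 0 = (2-1) + (0-1) + n$, i.e.\ $n = 3$, yet $\diffpower{I}{3} = (x_1^4)$ by Proposition \ref{SingleMonomialDiffPower} and $x_1^3 \notin (x_1^4)$. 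Your $M(\gamma) = 2$ is the correct threshold. Your final reduction to the equality case — "pick an $i$ and lower $\gamma_i$, otherwise iterate" — is where the paper explicitly splits on $|\supp(\gamma)| \le n$ versus $|\supp(\gamma)| > n$; your sketch is a bit loose there (when every contributing $\gamma_i$ equals $\alpha_i$ you cannot partially reduce and must instead drop whole indices), but the idea is the same and straightforward to complete.
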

\begin{proof}
Note that $\diffpower{I}{n}$ is a monomial ideal by Proposition \ref{diffpowerismonomial}. Suppose $\gamma \in \mathbb{Z}_{\geq 0}^d$.  We will first show that $x^{\gamma} \in \diffpower{I}{n}$ if and only if
\[
\sum_{\gamma_i \geq \alpha_i} \gamma_i \geq \sum_{\gamma_i \geq \alpha_i} (\alpha_i-1) + n.
\]

First suppose $\sum_{\gamma_i \geq \alpha_i} \gamma_i \geq \sum_{\gamma_i \geq \alpha_i} (\alpha_i-1) + n$.  Let $\beta \in \mathbb{Z}_{\geq 0}^d$ and $|\beta| \leq n-1$ so that $\partial^{\beta} \in D^{n-1}(R)$.  Denote $S = \supp(\gamma)$.
If there exists an index $j$ such that $\beta_j > \gamma_j$, then $\partial^\beta(x^{\gamma})= 0 \in I$. 
Now assume that $\beta_i \le \gamma_i$ for all $i$, which means $\beta_i = 0$ for all $i \notin S$. In other words, $\partial^\beta = \prod_{i\in S} \frac{\partial^{\beta_i}}{\partial x_i^{\beta_i}}$. Since $|\beta| \leq n-1$, we also have that $\sum_{i\in S} \beta_{i} \le n-1$. Note that
\[
\partial^{\beta}(x^{\gamma}) = c\prod_{i\in S} x_i^{\gamma_i - \beta_i}
\]
where $c$ is some constant in $\kk$. Since 
$$\sum_{i\in S} (\gamma_i - \beta_i) = \sum_{i\in S} \gamma_i - \sum_{i\in S} \beta_i
\geq \sum_{i\in S} (\alpha_i-1) + n - \sum_{i\in S} \beta_i \ge \sum_{i\in S} (\alpha_i-1) + 1,$$
then by the Pigeonhole Principle, there exists an index $l\in S$ such that $\gamma_l - \beta_l \ge \alpha_l$. This implies that $\partial^\beta(x^{\gamma} ) \in I$. By Corollary \ref{differentiallemma}, $x^{\gamma} \in \diffpower{I}{n}$.

Now suppose $\sum_{\gamma_i \geq \alpha_i} \gamma_i < \sum_{\gamma_i \geq \alpha_i} (\alpha_i-1) + n$.  Consider the differential operator $\partial^{\omega}$, where $\omega_i := \gamma_i - \alpha_i + 1$  whenever $\gamma_i \geq \alpha_i$ and $\omega_i := 0$ otherwise.  Let $S := \supp(\omega) = \{ i \mid \gamma_i \geq \alpha_i\}$.  First note that
\begin{align*}
\sum_{i=1}^d \omega_i = \sum_{i \in S} (\gamma_i - \alpha_i + 1) 
&= \sum_{i \in S} \gamma_i - \sum_{\gamma_i \geq \alpha_i} \alpha_i + |S| \\
&\leq \sum_{i \in S} (\alpha_i - 1) + n - 1 - \sum_{i \in S} \alpha_i + |S| \\
&= n - 1
\end{align*}
so that $\partial^{\omega} \in D^{n-1}(R)$.  Further, we have
\[
\partial^{\omega}(x^{\gamma}) = c\prod_{\gamma_i < \alpha_i} x^{\gamma_i} \prod_{\gamma_i \geq \alpha_i} x_i^{\gamma_i - (\gamma_i - \alpha_i + 1)}= c\prod_{\gamma_i < \alpha_i} x^{\gamma_i} \prod_{\gamma_i \geq \alpha_i} x_i^{\alpha_i - 1}
\]
for some $c \in \kk$, so that $\partial^{\omega}(x^{\gamma}) \notin I$, which means $x^{\gamma} \notin \diffpower{I}{n}$.

Thus, $x^{\gamma} \in \diffpower{I}{n}$ if and only if $\sum_{\gamma_i \geq \alpha_i} \gamma_i \geq \sum_{\gamma_i \geq \alpha_i} (\alpha_i-1) + n$.

Now note that if $\sum_{\gamma_i \geq \alpha_i} \gamma_i \geq \sum_{\gamma_i \geq \alpha_i} (\alpha_i-1) + n$, then we can write $x^{\gamma} = rx^{\gamma'}$ where $r = \prod_{\gamma_i < \alpha_i} x_i^{\gamma_i}$ and $x^{\gamma'} = \prod_{\gamma_i \geq \alpha_i} x_i^{\gamma_i}$, so that $\gamma'$ satisfies \eqref{comparison} by construction.  Thus, $\diffpower{I}{n}$ is generated by $x^\gamma$ satisfying \eqref{comparison} and $\sum_{\gamma_i \geq \alpha_i} \gamma_i \geq \sum_{\gamma_i \geq \alpha_i} (\alpha_i-1) + n$.  

Finally, suppose $x^{\gamma}$ satisfies \eqref{comparison} and $\sum_{\gamma_i \geq \alpha_i} \gamma_i > \sum_{\gamma_i \geq \alpha_i} (\alpha_i-1) + n$.  First suppose $|\supp(\gamma)| \leq n$.  Then, since
\[
\sum_{\gamma_i \geq \alpha_i} (\alpha_i - 1) + n = \sum_{\gamma_i \geq \alpha_i} \alpha_i - |\supp(\gamma)| + n \geq \sum_{\gamma_i \geq \alpha_i} \alpha_i
\]
we can write $x^{\gamma} = rx^{\gamma'}$ for some $r \in R$ so that $\gamma'$ satisfies \eqref{comparison} and \eqref{gammasAndalphas}.  
Otherwise, if $|\supp(\gamma)| > n$, let $\supp(\gamma) = \{i_1,\dots,i_m\}$ and notice that $m > n$.  Since $\gamma_i \geq \alpha_i$ for each $i \in \supp(\gamma)$, we have that
\[
\sum_{j=1}^n \gamma_{i_j} \geq \sum_{j=1}^n \alpha_{i_j} = \sum_{j=1}^n (\alpha_{i_j} -1) + n
\]
In particular, $x^{\gamma} = \prod_{j=1}^n x_{i_j}^{\gamma_{i_j}}\prod_{j=n+1}^m x_{i_j}^{\gamma_{i_j}}$ and if we let $x^{\gamma'} := \prod_{j=1}^n x_{i_j}^{\gamma_{i_j}}$, then $\gamma'$ satisfies \eqref{comparison}, $\sum_{\gamma_i' \geq \alpha_i} \gamma_i' \geq \sum_{\gamma_i' \geq \alpha_i} (\alpha_i-1) + n$, and $|\supp(\gamma')| = n$, so that by the argument above, we can write $x^{\gamma} = rx^{\gamma''}$ for some $r \in R$ and $\gamma''$ satisfying \eqref{comparison} and \eqref{gammasAndalphas}.
\end{proof}

\begin{ex}
Let $R = \CC[x, y]$ and $I = (x^2, y^3)$. By Theorem \ref{monomialformulaupdated}, when $n = 2$, the generator with only the $x$ term is $x^{(2-1)+2} = x^3$, the generator with only the $y$ term is $y^4$, and the generator with both $x$ and $y$ terms is $x^2y^3$ where $2+3 = (2-1)+(3-1)+2$. So we have $\diffpower{I}{2} = (x^3,y^4,x^2y^3)$.\\
When $n = 3$, the generator with only the $x$ term is $x^4$, the generator with only the $y$ term is $y^5$, and the generators with both $x$ and $y$ terms are $x^3y^3$ and $x^2y^4$, we can check $3+3 = 2+4 = (2-1)+(3-1)+3$. So we have $\diffpower{I}{3} = (x^4,y^5,x^3y^3,x^2y^4)$.
\end{ex}

Since differential powers commute with intersection, by Lemma \ref{exercise2.13}, we can use Theorem \ref{herzoghibi1.3.1} and Theorem \ref{monomialformulaupdated} to compute differential powers of any monomial ideal.  We illustrate this by giving a formula for the differential powers of principal monomial ideals below.

Since Lemma \ref{exercise2.13} tells us the differential power respects intersections, we are able to compute the $n$-th differential power of any monomial ideal by breaking it up into an intersection of ideals generated by pure powers of variables. When $I$ is generated by a single monomial, we can explicitly write the generators of $\diffpower{I}{n}$ for all $n$.

\begin{prop}\label{SingleMonomialDiffPower}
Let $R = \kk[x_1, \ldots, x_d]$ where $\kk$ is a field of characteristic 0, and let $\alpha \in \mathbb{Z}_{\geq 0}^d$. Consider the principal ideal $I =  ( x^{\alpha})\subset R$.
Then 
\[\diffpower{I}{n} = \left(\prod_{i\in\supp(\alpha)}x_i^{\alpha_i + n - 1}\right).\]
\end{prop}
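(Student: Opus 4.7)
The plan is to reduce the principal ideal case $(x^\alpha)$ to the case of ideals generated by a pure power of a single variable, compute that case by hand, and then reassemble via intersection.

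First, I would decompose $(x^\alpha)$ as $\bigcap_{i \in \supp(\alpha)} (x_i^{\alpha_i})$. This follows by iteratively applying Proposition \ref{herzoghibi1.2.1}: the intersection of the principal monomial ideals $(x_i^{\alpha_i})$ is generated by the single LCM $\prod_{i \in \supp(\alpha)} x_i^{\alpha_i} = x^\alpha$. Then, invoking Lemma \ref{exercise2.13} (differential powers commute with intersection), I obtain
\[
\diffpower{(x^\alpha)}{n} \;=\; \bigcap_{i \in \supp(\alpha)} \diffpower{(x_i^{\alpha_i})}{n}.
\]

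Next, I would establish the key one-variable computation $\diffpower{(x_i^{\alpha_i})}{n} = (x_i^{\alpha_i + n - 1})$. For the inclusion $\supseteq$, I would apply Corollary \ref{differentiallemma}: for any $\beta \in \mathbb{Z}_{\geq 0}^d$ with $|\beta| \leq n - 1$, the derivative $\partial^\beta(x_i^{\alpha_i + n - 1})$ vanishes unless $\beta_j = 0$ for every $j \neq i$, in which case it is a scalar times $x_i^{\alpha_i + n - 1 - \beta_i}$, and $\beta_i \leq n - 1$ guarantees membership in $(x_i^{\alpha_i})$. For the reverse inclusion, I would use Proposition \ref{diffpowerismonomial} to reduce to checking monomials $x^\gamma \in \diffpower{(x_i^{\alpha_i})}{n}$. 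If $\gamma_i < \alpha_i + n - 1$, I would exhibit a witness operator of order at most $n-1$ whose action takes $x^\gamma$ out of $(x_i^{\alpha_i})$: namely $\partial_i^{\min(\gamma_i, n-1)}$, which either produces a nonzero monomial with no $x_i$ (when $\gamma_i \leq n-1$) or reduces the exponent of $x_i$ below $\alpha_i$. Either way, this contradicts membership in $\diffpower{(x_i^{\alpha_i})}{n}$, so $\gamma_i \geq \alpha_i + n - 1$.

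Finally, I would reassemble by applying Proposition \ref{herzoghibi1.2.1} once more: the intersection $\bigcap_{i \in \supp(\alpha)} (x_i^{\alpha_i + n - 1})$ is generated by the LCM of the generators $x_i^{\alpha_i + n - 1}$, which—because these are powers of distinct variables—is exactly $\prod_{i \in \supp(\alpha)} x_i^{\alpha_i + n - 1}$, proving the claimed equality. The main obstacle here is the one-variable computation in the middle step; the rest is bookkeeping that leverages results already in hand. Notably, using Proposition \ref{diffpowerismonomial} to restrict attention to monomials is what makes the reverse inclusion in that step clean, since otherwise one would have to consider arbitrary $\kk$-linear combinations.
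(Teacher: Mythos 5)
Your proof is correct and follows the same overall strategy as the paper: decompose $(x^\alpha)$ as $\bigcap_{i\in\supp(\alpha)}(x_i^{\alpha_i})$, push the differential power through the intersection via Lemma \ref{exercise2.13}, compute the one-variable differential powers, and recombine. The only difference is in the middle step: the paper simply cites Theorem \ref{monomialformulaupdated} to get $\diffpower{(x_i^{\alpha_i})}{n} = (x_i^{\alpha_i + n - 1})$, whereas you prove this special case by hand using Corollary \ref{differentiallemma} and Proposition \ref{diffpowerismonomial}, with $\partial_i^{\min(\gamma_i,n-1)}$ as the explicit witness for the reverse inclusion. Your argument is correct (in the case $\gamma_i \le n-1$ you kill the $x_i$ factor entirely, and in the case $\gamma_i \ge n$ you drop its exponent to $\gamma_i - n + 1 < \alpha_i$); it is in effect an inlined specialization of the proof of Theorem \ref{monomialformulaupdated} to the pure-power-of-one-variable setting. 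This buys you self-containment at the cost of redoing a computation the paper has already done in greater generality; citing the theorem, as the paper does, is the shorter route given the surrounding context.
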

\begin{proof}
First notice that $I = \bigcap_{i \in \supp(\alpha)} (x_i^{\alpha_i})$. From Lemma \ref{exercise2.13}, we know that differential powers respect finite intersections. Thus,
\[
\diffpower{I}{n} =\bigcap_{i \in \supp(\alpha)} \diffpower{(x_i^{\alpha_i})}{n}
\]
We know how to compute each of these differential powers by Theorem \ref{monomialformulaupdated}. This gives us
\[
\diffpower{I}{n} = \bigcap_{i\in\supp(\alpha)} (x_i^{\alpha_i - 1 + n})
\]
Finally, we rewrite this intersection as a single ideal
\[
\diffpower{I}{n} = \left( \prod_{i \in \supp(\alpha)} x_i^{\alpha_i +n - 1}\right) \qedhere
\] 
\end{proof}

More generally, we can compute the differential power of an arbitrary monomial ideal using Theorem \ref{monomialformulaupdated}.

\begin{ex}
Let $R = \CC[x,y,z]$ and $I = (xy,z^2) = (x,z^2)\cap (y,z^2)$. If we want to calculate $\diffpower{I}{3}$, it is sufficient to calculate $\diffpower{(x,z^2)}{3}\cap\diffpower{(x,z^2)}{3} = (x^3,x^2z^2,xz^3,z^4)\cap (y^3,y^2z^2,yz^3,z^4) = (x^3y^3,x^2y^2z^2,xyz^3,z^4)$.
\end{ex}

\section{Differential Powers which are Eventually Principal}
\label{sec:principal}

In this section, we will study when $\diffpower{I}{n}$ is principal for some $n \gg 0$.  We will discuss only the case that $R = \kk[x,y]$ and $R =\kk[x,y,z]$, starting with the two variable case.
Notice that in this case, we can create a graphical representation for every monomial ideal in $R$ using an integer lattice where the $x$-axis is the power of $x$ and $y$-axis is the power of $y$. For example, the point $(2,3)$ would represent the set of monomials $cx^2y^3$ where $c\in \kk^\times$ and the ideal $I = (xy^2,x^3)$ would be all the points inside the shaded area. This shaded region is called the \emph{exponent set} of $I$; see \cite[\S 1.4]{HunekeSwansonIntegral}.
\begin{figure}[H]
    \centering
    \input{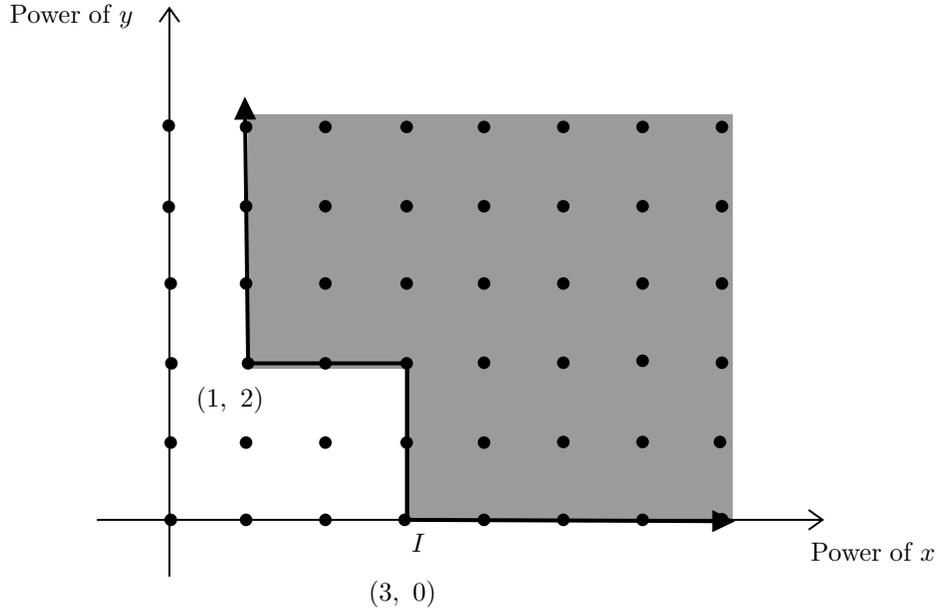}
    \caption{The exponent set of $I = (xy^2,x^3)$}
    \label{fig:IntegerLatticExJustIdeal}
\end{figure}

Taking the partial derivative $\partial/ \partial y$ would be a translation down in the $y$ direction, and taking the partial derivative $\partial/ \partial x$ would be a translation left in the $x$ direction. Now we can also represent the differential powers of monomial ideals graphically. For example, the 2nd and 3rd differential power of the ideal $I = (xy^2,x^3)$ can be represented as
\begin{figure}[H]
    \centering
    \input{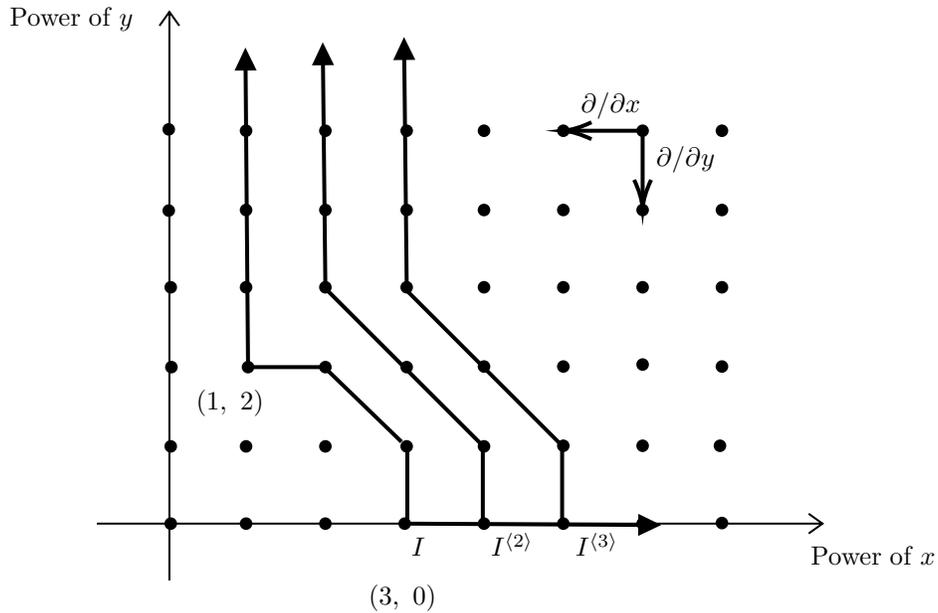}
    \caption{The exponent set of $I = (xy^2,x^3)$ and its differential powers}
    \label{fig:IntegerLatticeEx}
\end{figure}

We now show that for all monomial ideals $I \subseteq \kk[x,y]$ with $\sqrt{I} = (xy)$, there exists a positive integer $N$ such that $\diffpower{I}{N}$ is principal. The case where $I$ is itself principal is covered by Proposition \ref{SingleMonomialDiffPower}, so we may assume that $I$ has at least two generators. First we fix some notation. 

\begin{setting}\label{2variablesetting}
Let $I$ be a non-principal monomial ideal in $R = \kk[x,y]$. We let $\{ x^{\beta_{11}}y^{\beta_{12}},\ldots,x^{\beta_{m1}}y^{\beta_{m2}}\}$ be the set of minimal generators of $I$, where $m>1$. Notice that we can rearrange the generators of $I$ to satisfy the inequality $\beta_{11} < \cdots < \beta_{m1}$ because $\beta_{k1} = \beta_{(k+1)1}$ implies that one of  $x^{\beta_{k1}}y^{\beta_{k2}}$ and $x^{\beta_{(k+1)1}}y^{\beta_{(k+1)2}}$ divides the other. This inequality implies another inequality $\beta_{12} > \cdots > \beta_{m2}$ because if $\beta_{l2} \le \beta_{(l+1)2}$ and $\beta_{l1} < \beta_{(l+1)1}$, then  $x^{\beta_{(l+1)1}}y^{\beta_{(l+1)2}}$ is divisible by $x^{\beta_{l1}}y^{\beta_{l2}}$. With our generators of $I$ labelled in this way, we fix $i$ to be an index such that $\beta_{(i+1)1}+\beta_{i2} = \max\{\beta_{(j+1)1}+\beta_{j2} \mid 1\le j < m\}$
\end{setting}

\begin{lemma}\label{sumOfPowerInTheIdeal} Let $I$ and $i$ be as in setting \ref{2variablesetting}. If $a+b \ge \beta_{(i+1)1}+\beta_{i2}-1$ and $a \ge \beta_{11}$, $b \ge \beta_{m2}$, then $x^ay^b \in I$.
\end{lemma}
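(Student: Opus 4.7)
The plan is to prove the contrapositive: assume that $x^ay^b \notin I$ and derive an upper bound on $a+b$ that contradicts the hypothesis $a+b \ge \beta_{(i+1)1}+\beta_{i2}-1$. Since $I$ is a monomial ideal, $x^ay^b \notin I$ is equivalent to saying that no minimal generator $x^{\beta_{k1}}y^{\beta_{k2}}$ divides $x^ay^b$, i.e., for every $k \in \{1,\dots,m\}$, either $a < \beta_{k1}$ or $b < \beta_{k2}$.

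First I would use the boundary hypotheses $a \ge \beta_{11}$ and $b \ge \beta_{m2}$ to locate where the failure of divisibility switches between the two coordinates. Since $a \ge \beta_{11}$, the divisibility failure at $k=1$ forces $b < \beta_{12}$; since $b \ge \beta_{m2}$, the divisibility failure at $k=m$ forces $a < \beta_{m1}$. Now let $j$ be the smallest index in $\{2,\dots,m\}$ with $a < \beta_{j1}$; such a $j$ exists because it holds at $k=m$, and by minimality $a \ge \beta_{(j-1)1}$. Then failure of divisibility at index $k=j-1$ forces $b < \beta_{(j-1)2}$.

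Combining these two strict inequalities (both in $\mathbb{Z}$) yields
\[
a + b \;\le\; (\beta_{j1} - 1) + (\beta_{(j-1)2} - 1) \;=\; \beta_{j1} + \beta_{(j-1)2} - 2.
\]
By the maximality in Setting \ref{2variablesetting} applied with $l = j-1 \in \{1,\dots,m-1\}$, we have $\beta_{(i+1)1} + \beta_{i2} \ge \beta_{j1} + \beta_{(j-1)2}$, so $a + b \le \beta_{(i+1)1} + \beta_{i2} - 2$, contradicting the hypothesis $a + b \ge \beta_{(i+1)1} + \beta_{i2} - 1$. Hence $x^a y^b \in I$.

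I do not expect a serious obstacle here: the argument is essentially a bookkeeping exercise about the staircase of a two-variable monomial ideal. The only delicate point is making sure the index $j$ is well defined and lies in $\{2,\dots,m\}$, which is exactly what the boundary hypotheses $a \ge \beta_{11}$ and $b \ge \beta_{m2}$ are designed to guarantee; once $j$ is in hand, invoking the maximality that defines $i$ closes the argument cleanly.
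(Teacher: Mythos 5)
Your proof is correct and is essentially the same argument as the paper's: both locate an index (your $j-1$, the paper's $k$) with $\beta_{(j-1)1} \le a < \beta_{j1}$, deduce $b < \beta_{(j-1)2}$ from the failure of divisibility at that generator, turn the two strict integer inequalities into $a+b \le \beta_{j1} + \beta_{(j-1)2} - 2$, and invoke the maximality defining $i$. The only cosmetic differences are the contrapositive framing and a slightly more explicit justification that the staircase index exists.
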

\begin{proof}
Suppose there exists such $a,b$ satisfying the condition, and $x^ay^b\not\in I$. Since $a \ge \beta_{11}$ and $b \ge \beta_{m2}$, then there exists an index $k$ such that $\beta_{k1} \le a < \beta_{(k+1)1}$.  Then since $x^ay^b \notin I$, we know that $x^{\beta_{k1}}y^{\beta_{k2}}$ does not divide $x^ay^b$ so that $\beta_{k2} > b$.  However, since $\beta_{k1},\beta_{(k+1)1},\beta_{k2},a,b$ are positive integers, then we have $\beta_{(k+1)1} \ge a + 1$ and $\beta_{k2} \ge b+1$. This implies that $\beta_{(k+1)} + \beta_{k2} \ge a+b+2 \ge \beta_{(i+1)1}+\beta_{i2} + 1$ which contradicts the assumption that $\beta_{(i+1)1}+\beta_{i2} = \max\{\beta_{(j+1)1}+\beta_{j2} \mid 1\le j < m\}$. Thus, $x^ay^b\in I$.
\end{proof}

\begin{thm}\label{principal2d}
Let $I$ and $i$ be as in setting \ref{2variablesetting}. Further, suppose that $\beta_{11}>0$ and $\beta_{m2}>0$. Set $N = \beta_{(i+1)1} + \beta_{i2} - \beta_{m2} - \beta_{11}$. Then $\diffpower{I}{n}$ is principal if and only if $n \geq N$.
\end{thm}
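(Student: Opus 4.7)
The plan is to combine the irreducible decomposition of $I$ with Proposition \ref{SingleMonomialDiffPower} and Theorem \ref{monomialformulaupdated} to obtain explicit control over $\diffpower{I}{n}$. First, by comparing exponent sets, one verifies that
\[
I \;=\; (x^{\beta_{11}}) \cap (y^{\beta_{m2}}) \cap \bigcap_{k=1}^{m-1} Q_k, \qquad Q_k := (x^{\beta_{(k+1)1}}, y^{\beta_{k2}}).
\]
Applying Lemma \ref{exercise2.13} and Proposition \ref{SingleMonomialDiffPower} then gives
\[
\diffpower{I}{n} \;=\; (x^{\beta_{11}+n-1}) \cap (y^{\beta_{m2}+n-1}) \cap \bigcap_{k=1}^{m-1} \diffpower{Q_k}{n}.
\]
In particular $\diffpower{I}{n} \subseteq (x^{\beta_{11}+n-1}y^{\beta_{m2}+n-1})$, which already forces every element of $\diffpower{I}{n}$ to have $x$-exponent at least $\beta_{11}+n-1$ and $y$-exponent at least $\beta_{m2}+n-1$.

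Next I would determine exactly when the candidate monomial $g_n := x^{\beta_{11}+n-1}y^{\beta_{m2}+n-1}$ itself lies in $\diffpower{I}{n}$. Since $g_n$ already belongs to $(x^{\beta_{11}+n-1}) \cap (y^{\beta_{m2}+n-1})$, the question reduces to whether $g_n \in \diffpower{Q_k}{n}$ for each $k$. Among the three families of generators listed in Theorem \ref{monomialformulaupdated}, the pure-power generators $x^{\beta_{(k+1)1}+n-1}$ and $y^{\beta_{k2}+n-1}$ cannot divide $g_n$ (the relevant exponent of $g_n$ is too small), so $g_n$ must be divisible by a ``mixed'' generator; a direct exponent calculation shows such a generator exists precisely when $n \geq \beta_{(k+1)1}+\beta_{k2}-\beta_{11}-\beta_{m2}$. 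Taking the maximum over $k$, this yields $g_n \in \diffpower{I}{n}$ iff $n \geq N$. Combined with the containment from the previous paragraph, this shows $\diffpower{I}{n} = (g_n)$ whenever $n \geq N$.

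For the converse, assume $n < N$; I would produce two elements of $\diffpower{I}{n}$ that pin down any hypothetical principal generator. Set
\[
M_1 := x^{\beta_{11}+n-1}\,y^{\beta_{(i+1)1}+\beta_{i2}-\beta_{11}-1}, \qquad M_2 := x^{\beta_{(i+1)1}+\beta_{i2}-\beta_{m2}-1}\,y^{\beta_{m2}+n-1},
\]
both of which have exponent sum $\beta_{(i+1)1}+\beta_{i2}+n-2$. For any $\alpha \in \zdgeq$ with $|\alpha|\leq n-1$, the monomial $\partial^\alpha(M_1)$ (when nonzero) has exponents $a',b'$ satisfying $a'\geq \beta_{11}$ and $b'\geq \beta_{m2}$ (the latter using $n\leq N$), together with $a'+b'\geq \beta_{(i+1)1}+\beta_{i2}-1$; Lemma \ref{sumOfPowerInTheIdeal} then places it in $I$. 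Corollary \ref{differentiallemma} now gives $M_1 \in \diffpower{I}{n}$, and the symmetric argument gives $M_2\in \diffpower{I}{n}$. Were $\diffpower{I}{n}$ principal with generator $g'$, the lower exponent bounds together with $g'\mid M_1$ (which forces the $x$-exponent of $g'$ to equal $\beta_{11}+n-1$) and $g'\mid M_2$ (which forces the $y$-exponent to equal $\beta_{m2}+n-1$) would give $g' = g_n$; but $g_n \notin \diffpower{I}{n}$ for $n<N$ by the characterization above, a contradiction.

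The main obstacle is pinpointing $M_1$ and $M_2$: choosing their exponent sum to be exactly $\beta_{(i+1)1}+\beta_{i2}+n-2$ is what allows the Lemma \ref{sumOfPowerInTheIdeal} hypothesis $a'+b'\geq \beta_{(i+1)1}+\beta_{i2}-1$ to hold sharply after differentiation of order $n-1$, while still keeping $b' \geq \beta_{m2}$ (respectively $a' \geq \beta_{11}$) for the required range of $n$; the maximality in the definition of $i$ is what makes this the correct threshold.
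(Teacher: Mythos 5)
Your proof is correct, and it takes a genuinely different route from the paper's. The paper's proof uses Lemma \ref{successivepowers} together with Proposition \ref{SingleMonomialDiffPower} to reduce the whole statement to two boundary checks: show $\diffpower{I}{N}$ is principal and $\diffpower{I}{N-1}$ is not, each handled by direct operator computations (in the first case checking divisibility of $\partial^\omega(x^ay^b)$ and the non-membership of $x^{a-1}y^e$, $x^fy^{b-1}$; in the second, exhibiting two ``corner'' monomials inside and their common divisor outside). You instead work with the explicit irreducible decomposition
\[
I = (x^{\beta_{11}}) \cap (y^{\beta_{m2}}) \cap \bigcap_{k=1}^{m-1}(x^{\beta_{(k+1)1}}, y^{\beta_{k2}}),
\]
which, combined with Lemma \ref{exercise2.13}, Proposition \ref{SingleMonomialDiffPower}, and Theorem \ref{monomialformulaupdated}, gives direct control of $\diffpower{I}{n}$ for every $n$ simultaneously. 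This immediately yields the containment $\diffpower{I}{n}\subseteq(g_n)$ that the paper proves by hand, and it makes the threshold $N$ fall out of an exponent computation on each $\diffpower{Q_k}{n}$, so that the ``only if'' direction is handled uniformly rather than by the reduction to $n=N-1$. Both arguments lean on Lemma \ref{sumOfPowerInTheIdeal} and Corollary \ref{differentiallemma} at the same point (certifying that specific monomials lie in $\diffpower{I}{n}$), but your approach avoids Lemma \ref{successivepowers} altogether and is more structural, making transparent exactly which inner corner of the exponent-set staircase (the one indexed by $i$) governs the threshold; the paper's reduction is shorter once the two endpoint cases are established.
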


\begin{proof} By Proposition \ref{SingleMonomialDiffPower}, the differential powers of a principal monomial ideal are principal monomial ideals. Thus, by Lemma \ref{successivepowers}, it suffices to show that $\diffpower{I}{N}$ is principal and that $\diffpower{I}{N-1}$ is not. 

To see that $\diffpower{I}{N}$ is principal, let $a = \beta_{(i+1)1}+\beta_{i2}-\beta_{m2}-1$, and $b =\beta_{(i+1)1}+\beta_{i2}-\beta_{11}-1$. We claim that $\diffpower{I}{N} = (x^ay^b)$. To see this, it suffices to show that $x^ay^b$ is an element of $\diffpower{I}{N}$ and that $x^{a-1}y^{e}$ and $x^{f}y^{b-1}$ are not elements of $\diffpower{I}{N}$ for all positive integers $e,f$. To show that $x^ay^b\in \diffpower{I}{N}$, it is sufficient to check that if $\omega = (d,N-1-d)$ for some $0 \leq d \leq N-1$, then $\partial^{\omega}(x^ay^b) \in I$.
Note that
\begin{align*}
    \partial^{\omega}(x^ay^b) &= \frac{\partial^{N-1}}{\partial x^{d} \partial y^{N-1-d}}(x^ay^b)\\
    &= cx^{a-d}y^{b-N+1+d}\\
    &= cx^{a-d}y^{\beta_{m2}+d}
\end{align*}
for some $c$ in $\kk^\times$.
By Lemma \ref{sumOfPowerInTheIdeal}, since $(a-d) + (\beta_{m2} + d) = a + \beta_{m2} = \beta_{(i+1)1}+\beta_{i2} - 1$ and $a - d \ge a-N+1 = \beta_{11}$, and $\beta_{m2} + d \ge \beta_{m2}$, then we know that $x^{a-d}y^{\beta_{m2}+d} \in I$, so that $x^ay^b \in \diffpower{I}{n}$

Now, let $e$ and $f$ be nonnegative integers. Because $\beta_{11}>0$ and $\beta_{m2}>0$, we have $\frac{\partial^{N-1}}{\partial x^{N-1}}(x^{a-1}y^{e}) = c_1x^{\beta_{11} -1}y^e \not\in I$ and $\frac{\partial^{N-1}}{\partial y^{N-1}}(x^{f}y^{b-1}) = c_2x^fy^{\beta_{m2} - 1} \not\in I$, for some $c_1, c_2 \in \kk^\times$.  In particular $x^{a-1}y^{e},x^{f}y^{b-1}\not\in \diffpower{I}{N}$, for any $e,f > 0$ so that $\diffpower{I}{N} = (x^{a}y^{b})$. 

Finally, we show that $\diffpower{I}{N-1}$ is not principal. Letting $a$ and $b$ be as before, we can show that $x^{a-1}y^b,x^ay^{b-1}\in\diffpower{I}{N-1}$ and $x^{a-1}y^{b-1}\not\in \diffpower{I}{N-1}$. By Corollary \ref{differentiallemma}, we only need to consider differential operators of the form $\partial^{\omega} = \frac{\partial^{|\omega|}}{\partial x^{\omega_1}\partial y^{\omega_2}}$ where $\omega \in \mathbb{Z}_{\geq 0}^2$ and $|\omega| \le N-2$. 
Note that
\begin{align*}
    a-1-\omega_1 + b - \omega_2 &\ge a+b-1-N+2\\
    &= \beta_{(i+1)1}+\beta_{i2}-1.
\end{align*}
Then by Lemma \ref{sumOfPowerInTheIdeal}, $\partial^{\omega}(x^{a-1}y^b)\in I$ which implies that $x^{a-1}y^b\in \diffpower{I}{N-1}$. Similarly,
\begin{align*}
    a-\omega_1 + b -1 - \omega_2 &\ge a+b-1-N+2\\
    &= \beta_{(i+1)1}+\beta_{i2}-1,
\end{align*}
so that by Lemma \ref{sumOfPowerInTheIdeal}, $\partial^{\omega}(x^{a}y^{b-1})\in I$ which implies that $x^{a}y^{b-1}\in \diffpower{I}{N-1}$. 
Since 
\[\frac{\partial^{N-2}}{\partial x^{\beta_{i2}-\beta_{m2}-1} \partial y^{\beta_{(i+1)1}-\beta_{11}-1}}(x^{a-1}y^{b-1}) = cx^{\beta_{(i+1)1}-1}y^{\beta_{i2}-1}\not\in I,
\]
(for some $c \in \kk$), we have $x^{a-1}y^{b-1}\not\in \diffpower{I}{N-1}$, which proves our claim.
\end{proof}

\begin{smallex}
Let $I= (x^2y^5,x^4y^3,x^5y)$, then we know that $\diffpower{I}{N}$ is principal where $N = 5+4-1-2 = 6$. This means that $\diffpower{I}{6} = (x^7y^6)$. We can confirm this by working with exponent sets, as in Figure \ref{fig:princpalDiffPower}.
\end{smallex}

\begin{figure}[H]
    \centering
    \input{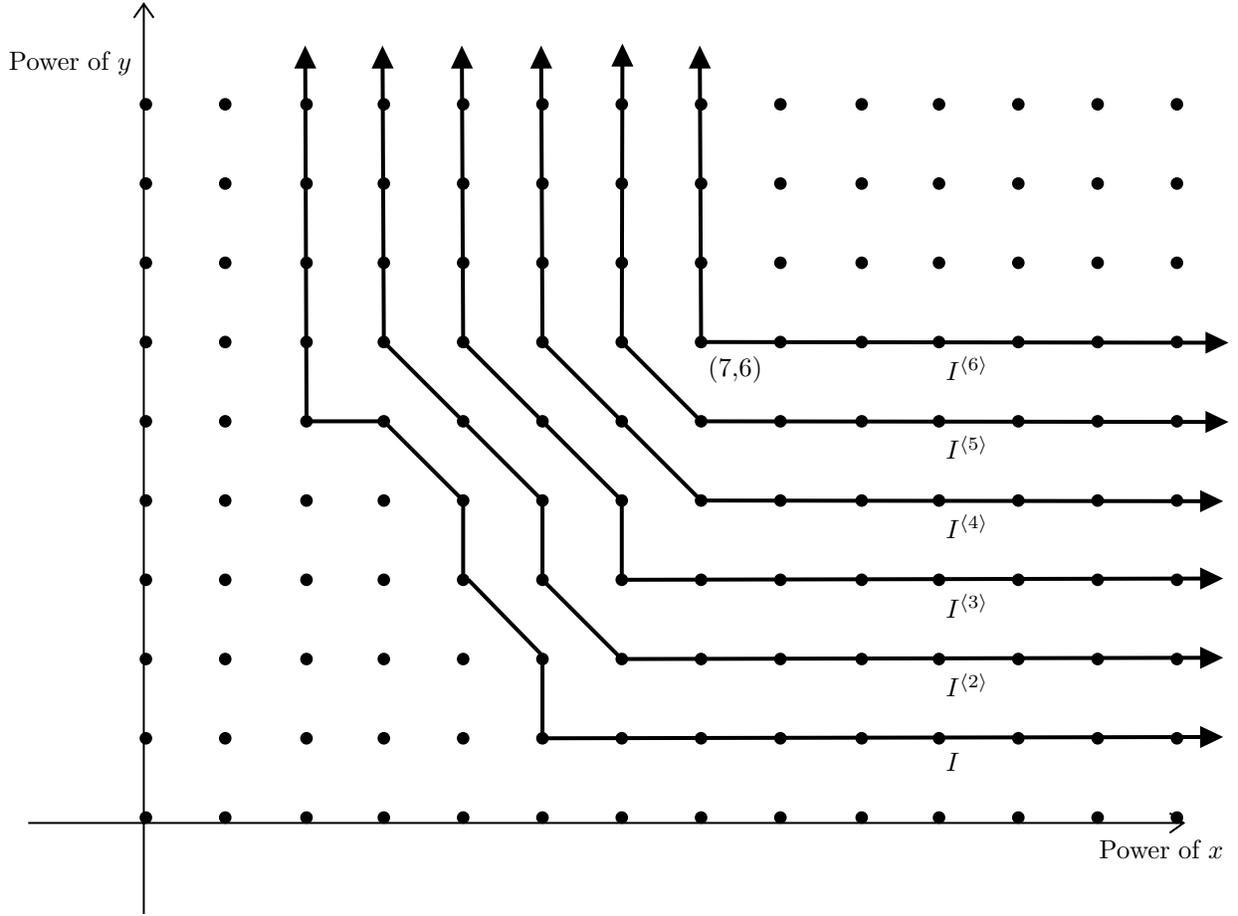}    
    \caption{The exponent sets of $I= (x^2y^5,x^4y^3,x^5y)$ and its differential powers.}
    \label{fig:princpalDiffPower}
\end{figure}

We can also prove an analog to Theorem \ref{principal2d} in $3$ variables.

\begin{thm}\label{principal3d}
Let $R = \kk[x,y,z]$, and let $I := (x^{\beta_{11}}y^{\beta_{12}}z^{\beta_{13}},\ldots,x^{\beta_{m1}}y^{\beta_{m2}}z^{\beta_{m3}}) \subset R$ be an ideal such that $\sqrt{I} = (xyz)$.  Then there exists a positive integer $N$ such that $\diffpower{I}{N}$ is principal.
\end{thm}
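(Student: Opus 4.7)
The plan is to use the M-irreducible decomposition from Theorem \ref{herzoghibi1.3.1} and then show that the three pure-variable components contribute a single corner monomial which eventually dominates every embedded component. Since $\sqrt{I}=(xyz)=(x)\cap(y)\cap(z)$, the minimal primes of $I$ are exactly $(x)$, $(y)$, $(z)$. In the irredundant M-irreducible decomposition each of these is the radical of a unique component, and since a monomial ideal with radical $(x)$ must be a pure power $(x^a)$, I may write
\[
I \;=\; (x^a)\cap(y^b)\cap(z^c)\cap Q_1\cap\cdots\cap Q_t,
\]
for some integers $a,b,c\ge 1$, where each remaining $Q_\ell=(x_i^{\alpha_i^{(\ell)}}:i\in S_\ell)$ satisfies $|S_\ell|\ge 2$.

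By Lemma \ref{exercise2.13} and Proposition \ref{SingleMonomialDiffPower},
\[
\diffpower{I}{n} \;=\; (x^{a+n-1})\cap(y^{b+n-1})\cap(z^{c+n-1})\cap\bigcap_{\ell=1}^{t}\diffpower{Q_\ell}{n} \;=\; (m_n)\cap\bigcap_{\ell=1}^{t}\diffpower{Q_\ell}{n},
\]
where $m_n := x^{a+n-1}y^{b+n-1}z^{c+n-1}$. In particular $\diffpower{I}{n}\subseteq(m_n)$ for every $n$, so it will suffice to prove that $m_n\in\diffpower{Q_\ell}{n}$ for every $\ell$ once $n$ is large.

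Following the Pigeonhole step inside the proof of Theorem \ref{monomialformulaupdated}, the inequality
\[
\sum_{i\in S_\ell}\gamma_i \;\ge\; \sum_{i\in S_\ell}\bigl(\alpha_i^{(\ell)}-1\bigr)+n
\]
is sufficient for $x^{\gamma}\in\diffpower{Q_\ell}{n}$: for any $\beta$ with $|\beta|\le n-1$ and $\beta\le\gamma$ one obtains $\sum_{i\in S_\ell}(\gamma_i-\beta_i)\ge \sum_{i\in S_\ell}(\alpha_i^{(\ell)}-1)+1$, whence Pigeonhole yields some $i\in S_\ell$ with $\gamma_i-\beta_i\ge \alpha_i^{(\ell)}$ and so $\partial^{\beta}(x^{\gamma})\in Q_\ell$. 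Substituting $\gamma=(a+n-1,b+n-1,c+n-1)$ and writing $(p_1,p_2,p_3):=(a,b,c)$, the sufficient condition becomes
\[
(|S_\ell|-1)\,n \;\ge\; \sum_{i\in S_\ell}\bigl(\alpha_i^{(\ell)}-p_i\bigr),
\]
whose left-hand side grows linearly in $n$ with positive slope because $|S_\ell|-1\ge 1$. Taking $N$ to be the maximum of the resulting thresholds over $\ell=1,\dots,t$, I conclude that for every $n\ge N$ one has $(m_n)\subseteq\diffpower{Q_\ell}{n}$ for every $\ell$, and therefore $\diffpower{I}{n}=(m_n)$ is principal.

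The main obstacle is this last verification: one must show the pure-variable corner $m_n$ is swallowed by each embedded $\diffpower{Q_\ell}{n}$. What makes it work is a simple slope comparison: the corner contributes $|S_\ell|\,n+O(1)$ to the left side of the sufficient inequality, while only $n+O(1)$ is demanded on the right, so the surplus $(|S_\ell|-1)n$ forces principalization for $n\gg 0$. A minor technical point worth flagging is that Theorem \ref{monomialformulaupdated} is stated with $\supp(\gamma)$-style sums, but the Pigeonhole core of its proof in fact gives the cleaner $S_\ell$-indexed sufficient condition used above, which is the exact form needed here.
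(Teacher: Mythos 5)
Your proof is correct, and it takes a genuinely different route than the paper's. The paper's argument works directly with the minimal generators $x^{\beta_{j1}}y^{\beta_{j2}}z^{\beta_{j3}}$ of $I$: it defines componentwise maxima and minima $\beta_{*\max},\beta_{*\min}$, sets $N = \max\{N_{xy},N_{yz},N_{xz}\}$, declares the corner monomial $x^ay^bz^c$ with $a = N-1+\beta_{x\min}$ etc., and then verifies $\diffpower{I}{N}=(x^ay^bz^c)$ by a direct contradiction argument bounding $a-\omega_1+b-\omega_2$ from below. Your approach instead routes everything through the M-irreducible decomposition (Theorem \ref{herzoghibi1.3.1}) and the compatibility of differential powers with intersection (Lemma \ref{exercise2.13}). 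Isolating $(x^a)\cap(y^b)\cap(z^c)$ as the components whose radicals are the minimal primes $(x),(y),(z)$, and then showing the corner $m_n = x^{a+n-1}y^{b+n-1}z^{c+n-1}$ is eventually absorbed by every embedded $\diffpower{Q_\ell}{n}$ because $|S_\ell|\ge 2$ makes the left side of the sufficient inequality grow with slope $|S_\ell|-1\ge 1$ while the right side is constant, is a cleaner structural mechanism. A concrete dividend: your argument is insensitive to the number of variables, so it also answers Question \ref{eventuallyPrincipleQuest} affirmatively for any $d$, where the paper's direct method was only carried out for $d=2,3$. Your technical flag is also apt --- the pigeonhole core of the proof of Theorem \ref{monomialformulaupdated} really does deliver the $\supp(\alpha)$-indexed sufficient condition $\sum_{i\in S_\ell}\gamma_i \ge \sum_{i\in S_\ell}(\alpha_i^{(\ell)}-1)+n$, and that form (rather than the $\{i:\gamma_i\ge\alpha_i\}$-indexed statement) is the one that is both correct and needed here; discarding the terms with $\gamma_i<\alpha_i^{(\ell)}$ only strengthens the inequality, so it is genuinely a sufficient condition. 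The one small thing worth making explicit is why exactly one component with radical $(x_i)$ appears for each $i$: at most one by irredundancy (two pure powers of $x_i$ are nested), and at least one because $I\subseteq(x_i)$ and $(x_i)$ is prime forces some component into $(x_i)$, whose radical must then equal $(x_i)$.
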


\begin{proof}
Let
\begin{align*}
    \beta_{xmax} = \max\{\beta_{i1}\mid i\in [m]\} &&
    \beta_{ymax} = \max\{\beta_{i2}\mid i\in [m]\} &&
    \beta_{zmax} = \max\{\beta_{i3}\mid i\in [m]\} \\
    \beta_{xmin} = \min\{\beta_{i1}\mid i\in [m]\} &&
    \beta_{ymin} = \min\{\beta_{i2}\mid i\in [m]\} &&
    \beta_{zmin} = \min\{\beta_{i3}\mid i\in [m]\}
\end{align*}

Define 
\begin{align*}
    N_{xy} &= \beta_{xmax} + \beta_{ymax} - \beta_{xmin} - \beta_{ymin} \\
    N_{yz} &= \beta_{ymax} + \beta_{zmax} - \beta_{ymin} - \beta_{zmin} \\
    N_{xz} &= \beta_{xmax} + \beta_{zmax} - \beta_{xmin} - \beta_{zmin}
\end{align*}
Let $N = \max\{N_{xy},N_{yz},N_{xz}\}$ and
\begin{align*}
    a = N-1 + \beta_{xmin} && b = N-1 + \beta_{ymin} && c = N-1 + \beta_{zmin}
\end{align*}

We will show that $\diffpower{I}{N} = (x^ay^bz^c)$.
It suffices to show that $x^ay^bz^c \in \diffpower{I}{N}$ and for any positive integers $d,e$, we have that $x^{a-1}y^{d}z^{e}$, $x^{d}y^{b-1}z^{e}$, and  $x^{d}y^{e}z^{c-1}$ are not in $\diffpower{I}{N}$. Since $\frac{\partial^{N-1}}{\partial x^{N-1}}x^ay^bz^c = C_1x^{\beta_{xmin}}y^bz^c \not\in \kk$, $\frac{\partial^{N-1}}{\partial y^{N-1}}x^ay^bz^c = C_2x^ay^{\beta_{ymin}}z^c \not\in \kk$, and $\frac{\partial^{N-1}}{\partial z^{N-1}}x^ay^bz^c = C_3x^ay^bz^{\beta_{zmin}} \not\in \kk$, then it is sufficient to check all of the differential operators in the form $\frac{\partial^{N-1}}{\partial x^{\omega_1} \partial y^{\omega_2} \partial z^{\omega_3}}$ where $\omega_1+\omega_2+\omega_3 = N - 1$.
Let $\omega \in \mathbb{Z}_{\geq 0}^3$ and $|\omega| = N-1$.  

For contradiction, suppose that $x^{a-\omega_1}y^{b-\omega_2}z^{c-\omega_3}\not\in I$ which is equivalent in $x^ay^bz^c\not\in \diffpower{I}{N}$. Since we know
\begin{align*}
    a - \omega_1 &\geq \beta_{xmin}\\
    b - \omega_2 &\geq \beta_{ymin},\textrm{ and }\\
    c - \omega_3 &\geq \beta_{zmin},
\end{align*}
we see that any two of 
\begin{align*}
    a-\omega_1 &\ge \beta_{xmax},\\
    b-\omega_2 &\ge \beta_{ymax},\textrm{ and }\\
    c-\omega_3 &\ge \beta_{zmax}
\end{align*}
being true implies $x^{a-\omega_1}y^{b-\omega_2}z^{c-\omega_3}\in I$. Thus, by the contrapositive, $x^{a-\omega_1}y^{b-\omega_2}z^{c-\omega_3}\not\in I$ implies that two of $a-\omega_1 < \beta_{xmax}, b-\omega_2 < \beta_{ymax},$ and $c-\omega_3 < \beta_{zmax}$ are true. Without loss of generality, suppose that $a-\omega_1 < \beta_{xmax}$ and $ b-\omega_2 < \beta_{ymax}$. This implies that $a-\omega_1 \le \beta_{xmax} -1$ and $ b-\omega_2 \le \beta_{ymax}-1$. This means that $a-\omega_1 + b-\omega_2 \le \beta_{xmax} + \beta_{ymax} - 2$. However, we know that
\begin{align*}
    a-\omega_1 + b-\omega_2 &\ge a + b-\omega_1-\omega_2-\omega_3\\
    &= a+b -N+1\\
    &= 2N - 2 + \beta_{xmin} + \beta_{ymin} - N + 1\\
    &= N -  1 + \beta_{xmin} + \beta_{ymin}\\
    &\ge \beta_{xmax} + \beta_{ymax} - 1.
\end{align*}
This is a contradiction, so $x^{a-\omega_1}y^{b-\omega_2}z^{c-\omega_3} \in I$ which implies that $x^ay^bz^c\in \diffpower{I}{N}$.

Now, because $\sqrt{I} = (xyz)$, we know that $\beta_{xmin}, \beta_{ymin},$ and $\beta_{zmin}$ are positive integers. Thus we see that 
\begin{align*}
    \frac{\partial^{N-1}}{\partial x^{N-1}}x^{a-1}y^{d}z^{e} &= C_1x^{\beta_{xmin}-1}y^{d}z^{e} \not\in I,\\ \frac{\partial^{N-1}}{\partial y^{N-1}}x^{d}y^{b-1}z^{e} &= C_2x^dy^{\beta_{ymin}-1}z^e \not\in I, \textrm{ and }\\ \frac{\partial^{N-1}}{\partial z^{N-1}}x^{d}y^{e}z^{c-1} &= C_3x^dy^ez^{\beta_{zmin}-1} \not\in I
\end{align*}
for all inegers $d, e \geq 0$. Thus we know that $\diffpower{I}{N} = (x^a y^b z^c)$.
\end{proof}

Unlike Theorem \ref{principal2d}, the $N$ found in Theorem \ref{principal3d} is not necessarily optimal. This prompts the following question.

\begin{ques}\label{eventuallyPrincipleQuest}
Given a monomial ideal $I$ in a ring $\kk[x_1, \ldots, x_d]$ with $\sqrt{I} = (x_1\cdots x_d)$, is there an $n$ such that $\diffpower{I}{n}$ is principal? What is the minimal such $n$?
\end{ques}

We conclude this subsection with a list of examples with omitted computations.
\begin{smallex}
For each $I \subseteq \kk[x,y,z]$ below, we list the $N$ produced by Theorem \ref{principal3d}, so that $\diffpower{I}{N}$ is principle.  We also list $N_{\min} := \min \{ n \mid \diffpower{I}{n}$ is principle $\}$.

\begin{center}
\begin{tabular}{|c|c|c|c|} \hline
Ideal $I$ & $N$ & $N_{\min}$ & $I^{\langle N_{\min} \rangle }$  \\ \hline
$(x^5 y^4 z, x^2 y z^{10}, x y^7 z^3)$    & 15  & 12 & $(x^{12}y^{12}z^{12})$ \\ \hline 
$(x^4 y z, x y^4 z, x y z^4)$ & 6 & 5 & $(x^{5}y^{5}z^{5})$ \\ \hline 
$(x^2 y^7 z^3, x^7 y^2 z^5, x y^5 z^7)$ & 11 & 9 & $(x^{9}y^{10}z^{11})$ \\
\hline 
$(x^7 y^9 z^3, x^5 y^5 z^7, x^{10} y^4 z^8)$ & 10 & 9 &  $(x^{13}y^{12}z^{11})$ \\ \hline
\end{tabular}
\end{center}
\end{smallex}

\section{The Containment Problem}
\label{sec:containment}
Inspired by a rich body of work comparing symbolic and ordinary powers of ideals, we ask the following questions: given an ideal $I \subseteq \kk[x_1, \ldots, x_d]$, for which integers $a$ and $b$ do we have $\diffpower{I}{a}\subseteq I^b$\ ? For which integers $a$ and $b$ do we have the reverse inclusion, $\diffpower{I}{a}\supseteq I^b$\ ? Since we have an explicit way (Theorem \ref{monomialformulaupdated} and Proposition \ref{SingleMonomialDiffPower}) to write out the generators of the $n$-th differential power of monomial ideals generated by pure powers of variables and of principle monomial ideals, we focus on these two cases.

First, we want to study the conditions in which $I^a\subseteq \diffpower{I}{b}$. The following holds in great generality:

\begin{prop}[Proposition 2.5, \cite{MR3779569}]\label{normalpowerContainment}
Let $R$ be a finitely generated $A$-algebra and $I$ an ideal of $R$. Then $I^n\subseteq \diffpowerA{I}{n}$.
\end{prop}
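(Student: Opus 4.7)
The plan is to prove this by induction on $n$, leveraging the inductive definition of $D_A^i(R)$ via commutators given in Definition 2.2.

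For the base case $n=1$: An element of $D_A^0(R)$ is multiplication by some $s \in R$. If $r \in I$, then $sr \in I$, so $r \in \diffpowerA{I}{1}$, giving $I \subseteq \diffpowerA{I}{1}$.

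For the inductive step, assume $I^n \subseteq \diffpowerA{I}{n}$ and let $r \in I^{n+1}$. By additivity of differential operators, it suffices to verify the claim on products of the form $r = a s$ with $a \in I$ and $s \in I^n$. Fix any $\partial \in D_A^n(R)$. The commutator $[\partial, a]$ lies in $D_A^{n-1}(R)$ by the definition of order. Unwinding the commutator and applying both sides to $s$ yields
\[
\partial(a s) \;=\; a \cdot \partial(s) \;+\; [\partial, a](s).
\]
The first summand lies in $I$ because $a \in I$. For the second summand, the inductive hypothesis gives $s \in \diffpowerA{I}{n}$, and since $[\partial, a]$ has order at most $n-1$, the definition of $\diffpowerA{I}{n}$ forces $[\partial, a](s) \in I$. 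Therefore $\partial(as) \in I$ for every $\partial \in D_A^n(R)$, which shows $as \in \diffpowerA{I}{n+1}$ and completes the induction.

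There is no real obstacle here; the proof hinges on the single identity $\partial \circ a = a \circ \partial + [\partial, a]$, which is immediate from the definition of the commutator, together with the crucial drop in order when passing from $\partial$ to $[\partial,a]$. This order drop is precisely what lets the induction synchronize the ordinary power $I^n$ with the differential power $\diffpowerA{I}{n}$. Note that the hypothesis that $R$ is finitely generated over $A$ is not actually needed for this argument; the containment $I^n \subseteq \diffpowerA{I}{n}$ holds in full generality.
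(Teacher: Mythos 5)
Your proof is correct, and since the paper simply cites this result as Proposition 2.5 of \cite{MR3779569} without reproducing an argument, your commutator induction is exactly the standard proof from that reference. Your closing observation is also right: nothing in the induction uses that $R$ is finitely generated over $A$, so the containment $I^n \subseteq \diffpowerA{I}{n}$ indeed holds for an arbitrary commutative $A$-algebra.
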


We aim to find sharper variants of this containment when $I$ is a monomial ideal in a polynomial ring of characteristic 0. One may hope for a $c > 1$ such that $I^n \subseteq \diffpower{I}{cn}$ for all $I$ and all $n$, but this is not possible: given such a $c$, consider the ideal $I = (x_1^{c+1})$. By Theorem \ref{monomialformulaupdated}, $\diffpower{I}{cn}$ is generated by the element $x_1^{c+cn}$. We know that $I^n$ is generated by $x_1^{cn + n}$. Since $I^n\subseteq \diffpower{I}{cn}$ implies that $cn + n \ge c + cn$, then we have $n \ge c$ for all $n\in \ZZ_{> 0}$ which implies that $c = 1$.

Thus, we seek containments of the form $I^n \subseteq \diffpower{I}{n+c}$ instead. We have the following:

\begin{prop}
$R = \kk[x_1, \ldots, x_d]$, where $\kk$ is a field of characteristic $0$, and let $I = (x_i^{\alpha_i} \mid i \in \supp(\alpha))$  for some $\alpha \in \mathbb{Z}_{\geq 0}^d$. Fix an arbitrary positive integer $n$. Then 
\[
  I^n \subseteq  \diffpower{I}{n+c},
\] 
where  $c = \min\{\sum_{i \in \supp(\omega)} (\alpha_i - 1)(\omega_i -1) \mid \omega \in \mathbb{Z}^d_{\ge 0}, \; \supp(\omega) \subset \supp(\alpha), \text{ and }  |\omega| = n\}$.
\end{prop}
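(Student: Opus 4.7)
The plan is to verify the containment on minimal generators of $I^n$ using the criterion of Corollary \ref{differentiallemma}. Since $I = (x_i^{\alpha_i} : i \in \supp(\alpha))$, every minimal generator of $I^n$ is a product of $n$ of the $x_i^{\alpha_i}$ with repetition, and so has the form $x^\gamma$ where $\gamma_i = \alpha_i \omega_i$ for some $\omega \in \mathbb{Z}_{\geq 0}^d$ with $\supp(\omega) \subseteq \supp(\alpha)$ and $|\omega| = n$. Fixing such an $\omega$, the task reduces to showing that $\partial^\beta(x^\gamma) \in I$ for every $\beta \in \mathbb{Z}_{\geq 0}^d$ with $|\beta| \leq n + c - 1$.

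The case $\beta_j > \gamma_j$ for some $j$ is automatic, since then $\partial^\beta(x^\gamma) = 0 \in I$. Otherwise $\beta \leq \gamma$ componentwise, and $\partial^\beta(x^\gamma)$ is a nonzero scalar multiple of $\prod_i x_i^{\gamma_i - \beta_i}$; this monomial lies in $I$ as soon as some coordinate $\gamma_i - \beta_i$ meets the threshold $\alpha_i$ for some $i \in \supp(\alpha)$. Producing such an index is the heart of the argument.

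The plan for this key step is a contradiction argument. Assume instead that $\gamma_i - \beta_i < \alpha_i$ for every $i \in \supp(\alpha)$. For $i \in \supp(\omega)$ this rearranges to $\beta_i \geq \alpha_i(\omega_i - 1) + 1$, while for $i \in \supp(\alpha)\setminus\supp(\omega)$ the inequality is vacuous (there $\gamma_i = 0$ forces $\beta_i = 0$ already). Summing the useful bounds over $\supp(\omega)$ and using the elementary identity $\alpha_i(\omega_i - 1) + 1 = (\alpha_i - 1)(\omega_i - 1) + \omega_i$ together with $\sum_{i \in \supp(\omega)} \omega_i = n$ gives
\[
  |\beta| \;\geq\; \sum_{i \in \supp(\omega)} \beta_i \;\geq\; \sum_{i \in \supp(\omega)} (\alpha_i - 1)(\omega_i - 1) + n \;\geq\; n + c,
\]
contradicting $|\beta| \leq n + c - 1$. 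The main obstacle is recognizing that the defining minimum for $c$ emerges naturally from this sum once the lower bound $\beta_i \geq \alpha_i(\omega_i - 1) + 1$ is massaged via the identity above; the remaining ingredients are a routine application of Corollary \ref{differentiallemma} and the standard description of generators of $I^n$.
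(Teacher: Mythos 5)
Your proof is correct. The route differs modestly from the paper's: the paper applies the membership criterion established in Theorem~\ref{monomialformulaupdated} (namely that a monomial $x^\gamma$ lies in $\diffpower{I}{N}$ exactly when $\sum_{\gamma_i \ge \alpha_i} \gamma_i \ge \sum_{\gamma_i \ge \alpha_i}(\alpha_i - 1) + N$) and just verifies that inequality for $\gamma = \alpha\omega$ by the very sum you compute, whereas you bypass Theorem~\ref{monomialformulaupdated} and check the derivative condition of Corollary~\ref{differentiallemma} directly, packaging the pigeonhole step as a contradiction. The underlying computation is the same: your identity $\alpha_i(\omega_i - 1) + 1 = (\alpha_i - 1)(\omega_i - 1) + \omega_i$ is precisely the rearrangement that makes $c$ emerge, mirroring the paper's chain $\sum \alpha_i\omega_i = \sum(\alpha_i - 1) + \sum(\alpha_i - 1)(\omega_i - 1) + n$. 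The paper's version is slightly shorter because it reuses the structural theorem; yours is self-contained and re-derives exactly the relevant direction of that theorem inline, which also makes it clear where the threshold $n + c$ is tight. One small point worth making explicit in a final write-up: when $\beta \le \gamma$ componentwise, $\beta_i = 0$ automatically for $i \notin \supp(\alpha)$ (since $\supp(\omega) \subseteq \supp(\alpha)$ forces $\gamma_i = 0$ there), so $|\beta| = \sum_{i \in \supp(\alpha)} \beta_i$ and the restriction of the sum to $\supp(\omega)$ only drops nonnegative terms — you note this, and it is exactly what keeps the bound honest.
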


\begin{proof}
We can write the set of generators of $I^n$ as \[\{x_1^{\alpha_1\omega_1}\cdots x_d^{\alpha_d\omega_d}\mid \omega \in \mathbb{Z}_{\geq 0}^d, \; \supp(\omega) \subset \supp(\alpha), \text{ and } |\omega| = n\}.\]
Let $x^{\alpha\omega} := x_1^{\alpha_1\omega_1}\cdots x_d^{\alpha_d\omega_d}$ be an arbitrary one of these monomial generators of $I^n$, and let $S := \supp(\omega)$. Note that $\omega_i\alpha_i \geq \alpha_i$ for each $i \in \supp(\omega_i\alpha_i) = \supp(\omega_i) = S$.  Also,
\begin{align*}
    \sum_{\omega_i\alpha_i \geq \alpha_i} \alpha_i \omega_i
    &=\sum_{i \in S} \alpha_i\omega_i\\
    &= \sum_{i\in S}(\alpha_i - 1)\omega_i + \sum_{i\in S}\omega_i\\
    &= \sum_{i\in S}(\alpha_i - 1)\omega_i + n\\
    &= \sum_{i\in S}(\alpha_i - 1) + \sum_{i\in S}(\alpha_i -1 )(\omega_i - 1) + n \\
    &\geq \sum_{i\in S}(\alpha_i - 1) + c  +n.
\end{align*}
Thus, by Theorem \ref{monomialformulaupdated}, $x^{\alpha\omega} \in \diffpower{I}{n+c}$, which means that $I^n \subseteq  \diffpower{I}{n+c}$.
\end{proof}

From \cite{MR1826369}, we have the containment is $I^{(dn)}\subseteq I^n$ for all positive integers $n$ and radical ideals $I$. Our goal is to find a constant $c$ that depends only on the ring $R$, such that $\diffpower{I}{cn}\subseteq I^n$ for some certain type of ideal $I$. However, such a uniform containment result is not possible. In fact, the situation is hopeless even if we replace $cn$ with any polynomial function of $n$:

\begin{prop}\label{prop:noContainment}
Let $R = \kk[x_1, \ldots, x_d]$ where $\kk$ is a field of characteristic 0. There does not exist a polynomial function $p: \mathbb N \to \mathbb N$ such that $\diffpower{I}{p(n)}\subseteq I^n$ for all $n\in \ZZ_{\ge 0}$.
\end{prop}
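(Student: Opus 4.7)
The plan is to argue by contradiction: assume such a polynomial $p$ exists, and produce a single pair $(I, n)$ at which the containment $\diffpower{I}{p(n)} \subseteq I^n$ must fail. The counterexamples will come from the principal monomial ideals $I_a := (x_1^a)$ for $a \in \ZZ_{>0}$ (viewed inside $\kk[x_1, \ldots, x_d]$), whose differential and ordinary powers are both completely explicit.

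Applying Proposition \ref{SingleMonomialDiffPower} with $\alpha = (a, 0, \ldots, 0)$ gives $\diffpower{I_a}{m} = (x_1^{a + m - 1})$ for every $m \ge 1$, while $I_a^n = (x_1^{an})$. Consequently, the hypothesized containment $\diffpower{I_a}{p(n)} \subseteq I_a^n$ is equivalent to the numerical inequality $a + p(n) - 1 \ge an$, that is,
\[
  p(n) \;\ge\; a(n-1) + 1.
\]

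To conclude, I would specialize $n = 2$, reducing the requirement to $p(2) \ge a + 1$ for \emph{every} positive integer $a$. Since $p(2)$ is a fixed nonnegative integer, choosing $a \ge p(2)$ produces an immediate contradiction, and in fact shows that not even a single constant bound works at $n = 2$. I do not expect a genuine obstacle: the authors already foreshadow this counterexample in the discussion immediately preceding the proposition. The conceptual point is that for principal monomial ideals the generator of $\diffpower{I}{m}$ grows only linearly in $m$ with slope $1$, whereas the generator of $I^n$ grows linearly in $n$ with slope $a$, a parameter we are free to make arbitrarily large; no single polynomial in $n$ can outrun this freedom in $a$.
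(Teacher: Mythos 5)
Your proof is correct and takes essentially the same route as the paper: both use the principal monomial ideal $(x_1^a)$, reduce the hypothesized containment to the inequality $p(n) \ge a(n-1)+1$ via the explicit formula for differential powers, and then specialize to $n=2$ to derive a contradiction by making $a$ large (the paper simply chooses $a = p(2)$ at the outset). The only cosmetic difference is that you cite Proposition \ref{SingleMonomialDiffPower} where the paper cites Theorem \ref{monomialformulaupdated}, and the paper first reduces to the case $p(n) \ge 1$ to ensure the formula applies, a harmless technicality your argument also covers.
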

\begin{proof} Let $p(n)$ be a polynomial function as in the theorem statement. Since $\diffpower{I}{p(n)+1} \subseteq \diffpower{I}{p(n)}$ for all $n$, we may assume that $p(n) \geq 1$ for all $n$. Let $I = (x_1^c)$ where $c = p(2)$. Then by Theorem \ref{monomialformulaupdated}, $\diffpower{I}{p(n)} = (x_1^{c-1 + p(n)})$. Since $I^n = (x_1^{cn})$, we have for all $n \in \mathbb{N}$,
\begin{align*}
    c - 1 + p(n) &\ge cn \\
    p(n) &\ge c (n-1) + 1.
\end{align*}
However, when $n = 2$ this inequality gives $p(2) \ge p(2) + 1$ which leads to a contradiction. Therefore, there does not exist a polynomial $p(n)$ such that $\diffpower{I}{p(n)}\subseteq I^n$ for all $n\in \ZZ_{\ge 0}$.
\end{proof}

Thus, we seek containments of the form $\diffpower{I}{cn} \subseteq I^n$ where $c$ may depend on $I$. First, we want to simplify the condition by letting $I$ be any irreducible monomial ideal, and we want to find a constant $c$ that depends on $n$ and see if the sequence of $c$ on $n$ converges.

\begin{thm}\label{irreduciblecontainment}
Let $R = \kk[x_1, \ldots, x_d]$ where $\kk$ is a field of characteristic 0, and let $I = (x_i^{\alpha_i} \mid i \in \supp(\alpha))$ for some $\alpha \in \mathbb{Z}_{\geq 0}^d$. If $c$ is a rational number with $c \geq \max\{\alpha_1,\ldots, \alpha_d\}$, $c> |\supp(\alpha)|$, and $cn \in \mathbb Z$, then $\diffpower{I}{cn} \subseteq I^n$.
\end{thm}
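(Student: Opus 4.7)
The plan is to use Theorem \ref{monomialformulaupdated} to characterize the monomial generators of $\diffpower{I}{cn}$, and then for each generator $x^\gamma$ produce an exponent $\omega$ with $|\omega| = n$ and $\supp(\omega) \subseteq \supp(\alpha)$ such that $\alpha_i\omega_i \leq \gamma_i$ for every $i$. That last condition ensures the monomial generator $\prod_i x_i^{\alpha_i\omega_i}$ of $I^n$ divides $x^\gamma$, and since $\diffpower{I}{cn}$ is monomial by Proposition \ref{diffpowerismonomial}, checking the containment generator-by-generator suffices.

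Before applying Theorem \ref{monomialformulaupdated}, I would reduce to the case $\supp(\alpha) = \{1,\ldots,d\}$. For any $j \notin \supp(\alpha)$, the conditions of Theorem \ref{monomialformulaupdated} are independent of $\gamma_j$, so the minimal generators of $\diffpower{I}{cn}$ all have support inside $\supp(\alpha)$; the containment in $I^n$ can therefore be checked inside the subring $\kk[x_i : i \in \supp(\alpha)]$, where $\alpha_i \geq 1$ for every index.

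With this reduction, the minimal generators of $\diffpower{I}{cn}$ are the monomials $x^\gamma$ for which $T := \supp(\gamma) = \{i : \gamma_i \geq \alpha_i\}$ and $\sum_{i \in T}(\gamma_i - \alpha_i + 1) = cn$. For each $i \in T$, set $k_i := \lfloor \gamma_i/\alpha_i \rfloor$; since $\gamma_i \geq \alpha_i \geq 1$, we have $k_i \geq 1$ and $k_i\alpha_i \leq \gamma_i$. Writing $\gamma_i = k_i\alpha_i + r_i$ with $0 \leq r_i \leq \alpha_i - 1$ gives $\gamma_i - \alpha_i + 1 \leq k_i\alpha_i$, so summing over $i \in T$ and bounding $\alpha_i \leq A := \max_i \alpha_i$ yields
\[
cn \;=\; \sum_{i \in T}(\gamma_i - \alpha_i + 1) \;\leq\; \sum_{i \in T} k_i\alpha_i \;\leq\; A\sum_{i \in T} k_i.
\]
This is the crux of the argument: invoking the hypothesis $c \geq A$ forces $\sum_{i \in T} k_i \geq cn/A \geq n$.

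To finish, I would select integers $0 \leq \omega_i \leq k_i$ for $i \in T$ (and $\omega_i = 0$ for $i \notin T$) with $\sum_i \omega_i = n$; this is possible because $\sum_{i \in T} k_i \geq n$. Then $\supp(\omega) \subseteq T \subseteq \supp(\alpha)$, $|\omega| = n$, and $\alpha_i\omega_i \leq \alpha_i k_i \leq \gamma_i$ for all $i$, so $\prod_i x_i^{\alpha_i\omega_i}$ is a generator of $I^n$ that divides $x^\gamma$, giving $x^\gamma \in I^n$. The main obstacle is the counting inequality $\sum_{i\in T} k_i \geq n$, which is precisely where $c \geq \max_i \alpha_i$ enters; notably, the second hypothesis $c > |\supp(\alpha)|$ does not seem to be required in this line of argument, which may simply reflect a different proof strategy taken by the authors.
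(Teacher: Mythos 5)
Your proof is correct, and it takes a genuinely different route from the paper's. Both arguments start the same way: reduce to generators $x^\gamma$ via Theorem~\ref{monomialformulaupdated} and look for an exponent $\omega$ with $\supp(\omega)\subseteq\supp(\alpha)$, $|\omega|\geq n$, and $\alpha_i\omega_i\leq\gamma_i$. But the choices of $\omega$ diverge. The paper sets $\delta_i=\gamma_i-\alpha_i$ on $S=\supp(\gamma)$ and takes $\omega_i=\lceil\delta_i/c\rceil$; the size bound then comes from $\sum_{i\in S}\delta_i=cn-|S|$, yielding $\sum\omega_i\geq\lceil n-|S|/c\rceil$, and this is why the paper needs $|S|<c$, i.e.\ $c>|\supp(\alpha)|$. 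You instead take $k_i=\lfloor\gamma_i/\alpha_i\rfloor$ and exploit the elementary inequality $\gamma_i-\alpha_i+1\leq k_i\alpha_i$, which after summing and bounding $\alpha_i\leq\max_j\alpha_j$ gives $\sum k_i\geq cn/\max_j\alpha_j\geq n$ using only $c\geq\max_j\alpha_j$. The divisibility condition $\alpha_i\omega_i\leq\alpha_i k_i\leq\gamma_i$ is immediate by the definition of $k_i$, whereas the paper's divisibility check requires a separate comparison $\gamma_i=\alpha_i(1+\delta_i/\alpha_i)\geq\alpha_i(1+\delta_i/c)\geq\alpha_i\omega_i$ using $\alpha_i\leq c$. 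The upshot is that your argument is not merely a reorganization: it proves the theorem under the weaker hypothesis $c\geq\max\{\alpha_1,\ldots,\alpha_d\}$ alone, dropping $c>|\supp(\alpha)|$ entirely. Your closing observation is right; the extra hypothesis in the statement is an artifact of the paper's choice of $\omega$, not something the containment itself requires. One small thing worth making explicit in a polished version is the reduction step: for $j\notin\supp(\alpha)$ the minimal generators of $\diffpower{I}{cn}$ have $\gamma_j=0$ (if $\gamma_j>0$ one can strip that factor and still satisfy both conditions of Theorem~\ref{monomialformulaupdated}), so passing to the subring $\kk[x_i : i\in\supp(\alpha)]$ is harmless; with that spelled out the argument is complete.
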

\begin{proof}
Let $c$ be as in the theorem statement and let $x^\gamma$ be one of the monomial generators of $\diffpower{I}{cn}$ from Theorem \ref{monomialformulaupdated}.  Then we know
\begin{enumerate}
    \item $\gamma_i \geq \alpha_i$ for each $i \in \supp(\gamma)$, and \label{cond1}
    \item $\sum\limits_{\gamma_i \geq \alpha_i} \gamma_i = \sum\limits_{\gamma_i \geq \alpha_i} (\alpha_i-1) + cn$ \label{cond2}
\end{enumerate}
We also know that $I^n$ is generated by \[\{x^{\alpha\omega} \mid \omega \in \mathbb{Z}_{\geq 0}^d, \; \supp(\omega) \subset \supp(\alpha), \text{ and } |\omega| = n\}.\]

We want to show that $x^{\gamma} \in I^n$. In other words, we want to show that there exists $\omega\in \ZZ_{\ge 0}^d$ such that $\supp(\omega) \subseteq \supp(\alpha)$, $|\omega| \geq n$, and $\gamma_i \ge \alpha_i \omega_i$ for all $i \in [d]$.  Let $S := \supp(\gamma)$. Let $\delta_i = \gamma_i - \alpha_i$ for every $i\in S$ and $\delta_i =0$ otherwise. 
Since $\gamma_i \ge \alpha_i$ for all $i\in S$, then $\delta_i \ge 0$ for all $i\in S$. 
Also, 
\begin{align*}
\sum_{i \in S} \delta_i &= \sum_{i \in S} (\gamma_i - \alpha_i) \\
&= \sum_{i\in S}(\alpha_i - 1) + cn - \sum_{i \in S} \alpha_i \\
&= cn - |S|
\end{align*}
Let $\omega_i = \lceil \frac{\delta_i}{c} \rceil$.  Then $\gamma_i = \alpha_i(1 + \frac{\delta_i}{\alpha_i}) \geq \alpha_i(1 + \frac{\delta_i}{c}) \geq \alpha_i \omega_i$

Also,
$$\sum_{i\in S} \omega_i = \sum_{i\in S}\left\lceil \frac{\delta_i}{c} \right\rceil \ge \left\lceil\sum_{i\in S}\frac{\delta_i}{c} \right\rceil = \left\lceil n - \frac{|S|}{c} \right\rceil = n$$
because $|S|< c$. We have shown that each generator of $\diffpower{I}{cn}$ is in $I^n$. Thus, $\diffpower{I}{cn}\subseteq I^n$.
\end{proof}

We can also find such a $c$ for principal monomial ideals.

\begin{prop}\label{SingleMonomialContainmentDependsonN}
Let $R = \kk[x_1, \ldots, x_d]$ where $\kk$ is a field of characteristic 0, and let $I = (x^{\alpha})$ for some $\alpha \in \mathbb{Z}_{\geq 0}^d$.  Then $\diffpower{I}{cn}\subseteq I^n$ where $c = \frac{(n-1)\alpha_\mathrm{max} + 1}{n}$ and $\alpha_{\mathrm{max}}=\max\{\alpha_i \mid i\in [d]\}$ for all $n$. 
\end{prop}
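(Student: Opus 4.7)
The plan is to reduce the containment to a single inequality between exponents, since both $\diffpower{I}{cn}$ and $I^n$ are principal monomial ideals whose generators we can write down explicitly.

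First, observe that the given value of $c$ is chosen precisely so that $cn = (n-1)\alpha_{\max} + 1$ is a positive integer, so the $(cn)$th differential power is well-defined. By Proposition \ref{SingleMonomialDiffPower}, we have
\[
\diffpower{I}{cn} = \left(\prod_{i \in \supp(\alpha)} x_i^{\alpha_i + cn - 1}\right),
\]
while it is immediate from the definition of $I^n$ that
\[
I^n = \left(\prod_{i \in \supp(\alpha)} x_i^{n\alpha_i}\right).
\]
Thus $\diffpower{I}{cn} \subseteq I^n$ is equivalent to the single divisibility condition
\[
\alpha_i + cn - 1 \geq n\alpha_i \quad \text{for every } i \in \supp(\alpha).
\]

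Next, I would verify this inequality directly by substituting the chosen value $cn = (n-1)\alpha_{\max} + 1$. For each $i \in \supp(\alpha)$, this substitution gives
\[
\alpha_i + cn - 1 = \alpha_i + (n-1)\alpha_{\max} \geq \alpha_i + (n-1)\alpha_i = n\alpha_i,
\]
where the inequality uses $\alpha_{\max} \geq \alpha_i$. This establishes the required divisibility, and hence the containment.

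There is no real obstacle here: the result is essentially a one-line check once Proposition \ref{SingleMonomialDiffPower} reduces both ideals to principal form. The only subtle point worth noting is the rationale for the formula for $c$: it is exactly the threshold at which the exponent inequality becomes tight at the index realizing $\alpha_{\max}$, explaining why no smaller $c$ (as a function of $n$ and $\alpha_{\max}$ alone) works.
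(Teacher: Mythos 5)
Your proof is correct and follows essentially the same approach as the paper: apply Proposition \ref{SingleMonomialDiffPower} to write $\diffpower{I}{cn}$ as a principal monomial ideal, reduce the containment to the exponent inequality $\alpha_i + cn - 1 \geq n\alpha_i$ for each $i \in \supp(\alpha)$, and verify it by substituting $cn = (n-1)\alpha_{\max} + 1$.
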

\begin{proof}
Let $S = \supp(\alpha)$.  By Proposition \ref{SingleMonomialDiffPower}, $\diffpower{I}{cn} = (\prod_{i\in S}x_i^{\alpha_i + cn - 1})$. 
We want to show that $\prod_{i\in S}x_i^{\alpha_i + cn - 1} \in I^n$. This is equivalent in showing that $\alpha_i + cn - 1 \ge \alpha_i n$ for all $i\in S$. Note that for all $i \in S$,
\begin{align*}
    \alpha_i + cn - 1 &= \alpha_i + \frac{(n-1)\alpha_\mathrm{max} + 1}{n}n - 1\\
    &= \alpha_i + (n-1)\alpha_\mathrm{max}\\
    &\ge \alpha_i n.
\end{align*}
Thus, $\diffpower{I}{cn}\subseteq I^n$.
\end{proof}
\begin{rmk}\label{SingleMonomialcontainment}
If $I = (x^{\alpha})$, then Proposition \ref{SingleMonomialContainmentDependsonN} also shows that $\diffpower{I}{\alpha_\mathrm{max}n}\subseteq I^n$ for all $n\in \mathbb{N}$.
\end{rmk}

\section{Differential Closure} \label{sec:diffClosure}

In this section, we introduce the differential closure of an ideal. The differential closure of an ideal is defined analogously to the integral closure or tight closure of an ideal, but using differential powers of an ideal instead of ordinary or Frobenius powers.

We start with some basic properties of differential closures and show they satisfy some of the axioms of a closure operation. Note that it's not clear whether differential closure is a closure operation in general, or whether the differential closure of an ideal is even another ideal. We show that both of these things are true when $R$ is a simple module over its ring of differential operators. This includes, for instance, strongly $F$-regular rings over a finite field.

\begin{defn}\label{closuredef}
Let $R$ be a commutative $A$-algebra domain and let $I \subseteq R$ be an ideal. We define the differential closure of $I$ (with respect to $A$) to be
\[
\diffidealA{I} = \{r \in R \mid \ \text{there exists} \ 0\neq c \in R : cr^n \in \diffpowerA{I}{n} \ \text{for all} \ n \gg 0 \}
\]
\end{defn}

It follows from this definition that the differential closure of an ideal contains its radical:

\begin{prop}[\cite{ZariskiNagataMixedChar}, Proposition 3.2 part 3]\label{inductivegeneral}\label{inductivegeneral2}
Let $R$ be a commutative $A$-algebra and let $I \subseteq R$ be an ideal, and let $n>0$. Then $\partial(I^n) \subseteq I^{n-t}$ for all $t$, $0 \leq t \leq n$, and $\partial \in D_A^t(R)$. In particular, taking $I$ to be a principal ideal $(r)$, we get $r^{n-t} \mid \partial(r^n)$.
\end{prop}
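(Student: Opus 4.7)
The plan is to prove the inclusion $\partial(I^n) \subseteq I^{n-t}$ by a nested induction: outer induction on the order $t$ of the differential operator, and for each fixed $t$, inner induction on the exponent $n$. The key mechanical input is the identity
\[
\partial(as) = [\partial, a](s) + a\,\partial(s),
\]
which is immediate from Definition \ref{defcommutator}, combined with the inductive definition of $D_A^t(R)$ which guarantees $[\partial,a] \in D_A^{t-1}(R)$ whenever $\partial \in D_A^t(R)$.

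For the outer base case $t=0$, an element of $D_A^0(R)$ is multiplication by some $c \in R$, so $\partial(I^n) = cI^n \subseteq I^n = I^{n-0}$, establishing the claim for all $n$. For the outer inductive step, fix $t \geq 1$ and a $\partial \in D_A^t(R)$, and assume the statement is known for every operator in $D_A^{t-1}(R)$ and every exponent. I would then induct on $n$. The inner base case $n=t$ reduces to $\partial(I^t) \subseteq I^0 = R$, which is vacuous.

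For the inner inductive step $n > t$, since $I^n$ is additively generated by products $a s$ with $a \in I$ and $s \in I^{n-1}$, and $\partial$ is $A$-linear, it suffices to check elements of this form. Applying the identity above, the term $[\partial,a](s)$ lies in $D_A^{t-1}(R)$ applied to $I^{n-1}$, which by the outer induction hypothesis sits inside $I^{(n-1)-(t-1)} = I^{n-t}$. The term $a\,\partial(s)$ is handled by the inner induction hypothesis, giving $\partial(s) \in I^{(n-1)-t}$ and hence $a\,\partial(s) \in I \cdot I^{n-t-1} = I^{n-t}$. Summing, $\partial(as) \in I^{n-t}$, completing the induction.

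The ``in particular'' statement is then immediate: if $I = (r)$, then $I^n = (r^n)$ and $I^{n-t} = (r^{n-t})$, so the containment $\partial(r^n) \in I^{n-t}$ is exactly the divisibility $r^{n-t} \mid \partial(r^n)$. I do not expect any real obstacle here; the only point requiring care is to set up the two inductions so that the two terms coming from the commutator identity are each covered by the correct hypothesis — the first term by dropping the order of $\partial$ by one, the second by dropping the exponent of $I$ by one.
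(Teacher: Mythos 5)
The paper does not prove this proposition; it simply states it with a citation to Proposition~3.2 of the cited reference, so there is no in-paper argument to compare against. That said, your proof is correct and self-contained. The nested induction is set up cleanly: the outer base case $t=0$ (multiplication operators) and the inner base case $n=t$ (target ideal is $I^0 = R$) are both handled; in the inner step you split $\partial(as) = [\partial,a](s) + a\,\partial(s)$ with $a \in I$, $s \in I^{n-1}$, and the two summands land in $I^{n-t}$ via the outer and inner induction hypotheses respectively, with the arithmetic $(n-1)-(t-1) = n-t$ and $1 + ((n-1)-t) = n-t$ working out. The passage to the principal-ideal case is immediate since $(r)^k = (r^k)$. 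This is essentially the standard commutator-induction argument one would expect the cited source to use, so you have supplied a correct proof of a fact the paper leaves as a citation.
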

\begin{prop}\label{radicalcontainment}
Let $R$ be an $A$-algebra domain, and let $I \subseteq R$ be a ideal. Then $\sqrt{I} \subseteq \diffidealA{I}$.
\end{prop}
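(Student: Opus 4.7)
The plan is a direct application of Proposition \ref{inductivegeneral} to principal ideals generated by powers of a radical element. Given $r \in \sqrt{I}$, fix the smallest $k \geq 1$ with $r^k \in I$. The case $r = 0$ is immediate (any nonzero $c$ works), so assume $r \neq 0$, and set $c := r^{k-1}$. Since $R$ is a domain, $c \neq 0$. I will show that $cr^n = r^{n+k-1} \in \diffpowerA{I}{n}$ for every $n \geq 1$, which directly witnesses $r \in \diffidealA{I}$ via Definition \ref{closuredef}.

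To verify the containment $r^{n+k-1} \in \diffpowerA{I}{n}$, take any $\partial \in D_A^{n-1}(R)$. Applying Proposition \ref{inductivegeneral} to the principal ideal $(r)$, with exponent $n+k-1$ and $t = n-1 \leq n+k-1$, gives
\[
\partial\bigl((r)^{n+k-1}\bigr) \subseteq (r)^{(n+k-1) - (n-1)} = (r)^{k}.
\]
Since $r^k \in I$, we have $(r)^k = (r^k) \subseteq I$, so $\partial(r^{n+k-1}) \in I$. As $\partial$ was an arbitrary element of $D_A^{n-1}(R)$, this shows $r^{n+k-1} \in \diffpowerA{I}{n}$, as desired.

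Combining the two steps: with the single choice $c = r^{k-1}$, we have $c r^n \in \diffpowerA{I}{n}$ for \emph{all} $n \geq 1$ (not merely for $n \gg 0$), so $r \in \diffidealA{I}$. There isn't really a hard step here; the key input is the second half of Proposition \ref{inductivegeneral}, which packages exactly the ``derivatives lower the exponent by at most the order'' statement needed to transfer $r^k \in I$ into differential control of $r^{n+k-1}$.
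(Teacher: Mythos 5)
Your proof is correct and follows essentially the same route as the paper's: both invoke Proposition~\ref{inductivegeneral} applied to the principal ideal $(r)$ to argue that any operator of order $<n$ applied to a suitable multiple of $r^n$ still lands in $(r^k)\subseteq I$. The only (harmless) difference is your choice of witness $c=r^{k-1}$ versus the paper's $c=r^k$; both work for the same reason.
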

\begin{proof}

Let $r \in \sqrt{I}$. Then there exists a $k > 0$ such that $r^k \in I$. We want to show $r \in \diffidealA{I}$, or equivalently, there exists a nonzero $c$ such that for all $n$ sufficiently large and for all $\partial \in D_A^{n-1}(R)$, we have $\partial(cr^n) \in I$.

Let $n > 0$, let $\partial \in D^{n-1}_A(R)$, and let $c = r^k$.  Then by Proposition \ref{inductivegeneral2}, $r^{(k + n) - n} | \partial(cr^n)$, so that $r^k | \partial(cr^n)$.  In particular, $\partial(cr^n) \in I$ so that $r \in \diffidealA{I}$, as desired.
\end{proof}

Next, we ask whether differential closure is a closure operation. We will later show in Theorem \ref{idempotenceradical} that $\diffideal{I} = \sqrt{I}$ for any ideal $I$ when $R$ is a simple $D_A(R)$-module, and all of the properties below follow from this fact. However, we include a direct proof here for some properties which hold more generally.

\begin{prop}\label{isclosureopalmost}
If $R$ is any commutative $A$-algebra domain, then, 
\begin{enumerate}
    \item[(1)] $I \subseteq \diffidealA{I}$
    \item[(2)] if $I \subseteq J$, then $\diffidealA{I} \subseteq \diffidealA{J}$
    \item[(3)] For $x \in \diffidealA{I}$ and $r\in R$, we have $rx \in \diffidealA{I}$. Further, if $I \subseteq R$ has the property that $\diffpowerA{I}{m} \cdot \diffpowerA{I}{n} \subseteq \diffpowerA{I}{m+n}$ for all $m$ and $n$ (for instance, if $R$ is a polynomial ring over $A$) then $\diffidealA{I}$ is an ideal.
\end{enumerate}
\end{prop}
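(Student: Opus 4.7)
The plan is to establish each statement in turn, with the main substance concentrated in the closure under addition at the end of part (3).

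For (1), I will invoke Proposition \ref{normalpowerContainment} to get $r^n \in I^n \subseteq \diffpowerA{I}{n}$ whenever $r \in I$; since $R$ is a domain, the constant $c = 1$ is nonzero and witnesses $r \in \diffidealA{I}$. For (2), the inclusion $I \subseteq J$ gives $\diffpowerA{I}{n} \subseteq \diffpowerA{J}{n}$ for every $n$ directly from Definition \ref{powerdef} (any $s$ with $\partial(s) \in I$ also has $\partial(s) \in J$), so any witness for $r \in \diffidealA{I}$ also witnesses $r \in \diffidealA{J}$.

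For the first assertion of (3), I will use the fact (recalled just after Definition \ref{powerdef}) that each $\diffpowerA{I}{n}$ is itself an ideal: given $cx^n \in \diffpowerA{I}{n}$ and $r \in R$, we have $c(rx)^n = r^n(cx^n) \in \diffpowerA{I}{n}$, so the same witness $c$ shows $rx \in \diffidealA{I}$.

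The main obstacle is proving that $\diffidealA{I}$ is closed under addition under the product hypothesis, since this is where the hypothesis genuinely enters. Given nonzero $x, y \in \diffidealA{I}$ with witnesses $(c_1, N_1)$ and $(c_2, N_2)$ (so that $c_1 x^n \in \diffpowerA{I}{n}$ for $n \geq N_1$ and similarly for $y$), I plan to expand $(x+y)^n$ via the binomial theorem and apply the hypothesis $\diffpowerA{I}{k} \cdot \diffpowerA{I}{n-k} \subseteq \diffpowerA{I}{n}$ termwise. The snag is that this requires $c_1 x^k \in \diffpowerA{I}{k}$ and $c_2 y^{n-k} \in \diffpowerA{I}{n-k}$ for \emph{every} $0 \leq k \leq n$, not merely for large indices. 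To bootstrap, I will replace $c_1$ with $c_1 x^{N_1}$, which is nonzero since $R$ is a domain: for every $k \geq 0$,
\[
(c_1 x^{N_1})\, x^k \;=\; c_1 x^{N_1 + k} \;\in\; \diffpowerA{I}{N_1 + k} \;\subseteq\; \diffpowerA{I}{k},
\]
using that differential powers are nonincreasing in the index. After making the analogous replacement for $c_2$, each term in the binomial expansion of $(c_1 x^{N_1})(c_2 y^{N_2})(x+y)^n$ lies in $\diffpowerA{I}{k} \cdot \diffpowerA{I}{n-k} \subseteq \diffpowerA{I}{n}$, so summing yields a nonzero witness for $x+y \in \diffidealA{I}$. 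The $x = 0$ or $y = 0$ cases are trivial since $0$ belongs to $\diffidealA{I}$. The conceptual content is that the ``eventually in $\diffpowerA{I}{n}$'' condition, once promoted to hold for every index, combines with the assumed submultiplicativity to give closure under sum.
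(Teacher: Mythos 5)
Your argument is correct and follows essentially the same route as the paper's: part (1) via $c=1$, part (2) straight from the definitions, the first half of (3) from $\diffpowerA{I}{n}$ being an ideal, and the closure under addition by promoting the ``eventually'' condition to ``always'' via the replacement $c_1 \rightsquigarrow c_1 x^{N_1}$ (the paper writes $c_{(a+b)} = c_a c_b a^N b^N$ with $N$ the common threshold, which is the same idea) and then applying the binomial theorem together with submultiplicativity. One small point worth fixing in part (1): you invoke Proposition~\ref{normalpowerContainment}, which is stated for $R$ a finitely generated $A$-algebra, but the proposition you are proving assumes only that $R$ is a commutative $A$-algebra domain; the paper instead cites Proposition~\ref{inductivegeneral} (which carries no finiteness hypothesis) to get $r \mid \partial(r^n)$ for $\partial$ of order $\le n-1$, hence $r^n \in \diffpowerA{I}{n}$. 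Swapping your citation to that proposition (or noting directly that $\partial(r^n) \in (r) \subseteq I$) removes the unwanted hypothesis.
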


\begin{proof} $ $

\underline{Proof of (1):}
Suppose $r \in I$. We want to show there exists a nonzero $c$ such that $cr^n \in \diffpowerA{I}{n}$ for all $n$ sufficiently large.
Let $\partial \in D_A^{n-1}$, and 
choose $c = 1$. By Proposition \ref{inductivegeneral}, $r | \partial(r^n)$, so that $\partial(r^n) \in I$ which means $r^n \in \diffpowerA{I}{n}$ as desired.

\underline{Proof of (2):} 
Suppose $I \subseteq J$ and let $a\in \diffidealA{I}$. By definition there exists a nonzero $c\in R$ such that $ca^n\in \diffpowerA{I}{n}$ for all sufficiently large $n$. This implies that $\partial(ca^n) \in I \subset J$ for all $A$-linear differential operators $\partial$ with order less than $n$. This means that $ca^n\in \diffpowerA{J}{n}$ for all sufficiently large $n$. By definition $a \in \diffidealA{J}$. Thus, $\diffidealA{I} \subseteq \diffidealA{J}$.

\underline{Proof of (3):} 
First, we want to show that the differential closure is closed under multiplication by a ring element. If $a \in \diffidealA{I}$, want to show $ra \in \diffidealA{I}$. This means we want to find a nonzero $c \in R$ such that $c(ra)^n \in \diffpowerA{I}{n}$ for all $n$ sufficiently large. We have $(ra)^n = r^n a^n$. We know there exists a nonzero $c$ such that $ca^n \in \diffpowerA{I}{n}$. Since $r^n$ is a ring element and $\diffpowerA{I}{n}$ is an ideal, $r^n (ca^n) \in \diffpowerA{I}{n}$. 

Now, suppose that $I \subseteq R$ has the property that $\diffpowerA{I}{m} \cdot \diffpowerA{I}{n} \subseteq \diffpowerA{I}{m+n}$ for all $m$ and $n$. For $a,b\in \diffidealA{I}$, we want to show that $a+b\in\diffidealA{I}.$ We know from our definitions that there exist nonzero $c_a, c_b \in R$ and $N \in \mathbb{N}$ such that $c_a a^n \in \diffpowerA{I}{n}$ and $c_b b^{n} \in \diffpowerA{I}{n}$ for all $n \geq N$. We want to show $c_{(a+b)} (a+b)^{m}  \in \diffpowerA{I}{m}$ for some nonzero $c_{(a+b)} \in R$ and for all $m$ sufficiently large. 
We know by the binomial theorem that 
$$c_{(a + b)} (a + b)^{m} 
= c_{(a + b)} \sum_{k = 0}^{m} \binom{m}{k} a^k b^{m - k} 
= \sum_{k=0}^{m} \binom{m}{k} c_{(a + b)} a^k b^{m - k}$$

Since $c_a a^n \in \diffpowerA{I}{n}$ and $c_b b^n \in \diffpowerA{I}{n}$ for all $n\ge N$, we know that $(c_a a^N) \cdot a^i \in \diffpowerA{I}{i + N} \subseteq \diffpowerA{I}{i}$ and $(c_b b^N) \cdot b^i \in \diffpowerA{I}{i + N} \subseteq \diffpowerA{I}{i}$ for all $i$ greater than zero. In this case, let $c_{(a + b)} = c_a c_b a^N b^N$ then we have $c_{(a + b)} a^{k} b^{m - k} = c_a a^{k+N} c_b b^{m - k + N}$. We know $c_a a^{k + N} \in \diffpowerA{I}{k}$ from our choice of $N$. Similarly, $c_b b^{m - k  + N} \in \diffpowerA{I}{m - k}$. So, $c_{(a + b)} a^{k} b^{m - k} \in \diffpowerA{I}{k} \cdot \diffpowerA{I}{m - k} \subseteq \diffpowerA{I}{m}$ and we are done.
\end{proof}

\begin{rmk}
In the case of a polynomial ring over a field of characteristic 0, this shows that differential closure has all the properties of a closure operation other than idempotence. Idempotence is difficult to prove from the definitions even for polynomial rings. We don't know of a proof that differential closure is idempotent other than by showing it sometimes agrees with the radical, in Theorem \ref{radicalDmodule}.
\end{rmk}

The rest of this section is devoted to characterizing when the reverse containment of Proposition \ref{radicalcontainment} holds. We will need the following lemma describing how differential closure interacts with intersections. 

\begin{lemma}\label{diffclosureintersection}
Let $R$ be a commutative $A$-algebra and let $\{I_\alpha \mid \alpha \in S \}$ be a family of ideals in $R$. Then
  \[
\diffidealA{\bigcap_{\alpha\in S}I_{\alpha}} \subseteq
        \bigcap_{\alpha \in S} \diffidealA{I_{\alpha}}. 
  \]
When $S$ is finite, we have
  \[
        \diffidealA{\bigcap_{\alpha\in S}I_{\alpha}} =
        \bigcap_{\alpha \in S} \diffidealA{I_{\alpha}} .
  \]

\end{lemma}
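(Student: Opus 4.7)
The plan is to reduce both containments to the corresponding statement for differential powers themselves, namely Lemma \ref{exercise2.13}, which asserts $\bigcap_{\alpha\in S}\diffpowerA{I_\alpha}{n} = \diffpowerA{\bigcap_{\alpha\in S} I_\alpha}{n}$ for every $n$. This is already strong enough to give the forward inclusion without any finiteness hypothesis.

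For the containment $\diffidealA{\bigcap_\alpha I_\alpha} \subseteq \bigcap_\alpha \diffidealA{I_\alpha}$, I would take $r \in \diffidealA{\bigcap_\alpha I_\alpha}$ and produce, by definition, a single nonzero $c \in R$ and an integer $N$ such that $cr^n \in \diffpowerA{\bigcap_\alpha I_\alpha}{n}$ for all $n \geq N$. Applying Lemma \ref{exercise2.13} rewrites this membership as $cr^n \in \bigcap_\alpha \diffpowerA{I_\alpha}{n}$, which in particular places $cr^n$ in $\diffpowerA{I_\alpha}{n}$ for each fixed $\alpha$. This witness shows $r \in \diffidealA{I_\alpha}$ for every $\alpha$, which is the desired inclusion. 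Note the witness $c$ used here works simultaneously for all $\alpha$, so no finiteness of $S$ is needed.

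For the reverse inclusion when $S = \{1, \dots, m\}$ is finite, the input is an element $r$ lying in $\diffidealA{I_\alpha}$ for every $\alpha$, so for each $\alpha$ there exist $0 \neq c_\alpha \in R$ and $N_\alpha \in \mathbb{N}$ with $c_\alpha r^n \in \diffpowerA{I_\alpha}{n}$ for all $n \geq N_\alpha$. The natural attempt is to take the product $c := c_1 c_2 \cdots c_m$ as a universal witness and set $N := \max_\alpha N_\alpha$. Then for $n \geq N$ and each fixed $\alpha$, writing $cr^n = \bigl(\prod_{\beta\neq\alpha} c_\beta\bigr)\bigl(c_\alpha r^n\bigr)$ and using that $\diffpowerA{I_\alpha}{n}$ is an ideal places $cr^n$ in $\diffpowerA{I_\alpha}{n}$. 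Intersecting over $\alpha$ and invoking Lemma \ref{exercise2.13} gives $cr^n \in \diffpowerA{\bigcap_\alpha I_\alpha}{n}$, so $r \in \diffidealA{\bigcap_\alpha I_\alpha}$.

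The only subtle point in this plan is ensuring $c \neq 0$, which is why the hypothesis should be read with the standing assumption from Definition \ref{closuredef} that $R$ is a domain; then a finite product of nonzero elements is nonzero. This is also exactly where the finiteness of $S$ is essential: for an infinite family one cannot form such a product, and there is no reason a single $c$ should witness membership for every $\alpha$ simultaneously, so the argument genuinely breaks in the infinite case. Apart from this, the proof is essentially bookkeeping on top of Lemma \ref{exercise2.13}.
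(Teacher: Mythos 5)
Your proof is correct, and the reverse inclusion (the substantive direction) is essentially identical to the paper's: both take $c = \prod_\alpha c_\alpha$, use that each $\diffpowerA{I_\alpha}{n}$ is an ideal to absorb the extraneous factors, and then invoke Lemma \ref{exercise2.13} to pass from the intersection of differential powers to the differential power of the intersection. The only divergence is in the forward inclusion: you unwind the definition of differential closure and apply Lemma \ref{exercise2.13} directly, while the paper simply cites monotonicity of the closure operation (Proposition \ref{isclosureopalmost} part (2)), which gives $\diffidealA{\bigcap_\alpha I_\alpha} \subseteq \diffidealA{I_\alpha}$ immediately from $\bigcap_\alpha I_\alpha \subseteq I_\alpha$. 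Both routes are valid; the paper's is a touch more economical since it reuses an already-established structural fact, whereas yours is self-contained and makes explicit why the infinite case poses no obstacle in this direction. Your remark that the lemma statement's ``commutative $A$-algebra'' should be read with the domain hypothesis from Definition \ref{closuredef} is a fair catch: the paper's own proof of the reverse inclusion likewise forms the product $\prod_\alpha c_\alpha$ and tacitly needs this to be nonzero, so the domain assumption is genuinely in play even though the lemma as stated does not repeat it.
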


\begin{proof}
First, we want to show that $\diffidealA{\bigcap_{\alpha\in S} I_\alpha}\subseteq\bigcap_{\alpha\in S} \diffidealA{I_\alpha}.$ We have $\bigcap_{\alpha\in S} I_\alpha\subseteq I_\alpha$ for every $\alpha\in S$. By Proposition \ref{isclosureopalmost}, this implies that $\diffidealA{\bigcap_{\alpha\in S}I_\alpha} \subseteq \diffidealA{I_\alpha}$ for every $\alpha\in S$. Thus, by the definition of intersection, we have
\[
\diffidealA{\bigcap_{\alpha\in S} I_\alpha}\subseteq\bigcap_{\alpha\in S} \diffidealA{I_\alpha}.
\]
Now we want to show that $\diffidealA{\bigcap_{\alpha\in S} I_\alpha}\supseteq\bigcap_{\alpha\in S} \diffidealA{I_\alpha}$ when $S$ is finite. Let $b\in \bigcap_{\alpha\in S} \diffidealA{I_\alpha}$ be an arbitrary element. This means that $b\in \diffidealA{I_{\alpha}}$ for all $\alpha\in S$. By definition, for all $\alpha\in S$ there exists an element $c_\alpha \in R$ such that $c_\alpha b^n \in \diffpowerA{I_{\alpha}}{n}$ for all $n$ sufficiently large. This implies that $(\prod_{\alpha\in S} c_{\alpha}) b^n \in \diffpowerA{I_\alpha}{n}$ for all $\alpha\in S$ and $n$ sufficiently large. 
By definition of intersection and Lemma \ref{exercise2.13}, we have 
\[
  \left(\prod_{\alpha\in S} c_{\alpha}\right) b^n 
  \in \bigcap_{\alpha\in S} \diffpowerA{I_\alpha}{n} 
  = \diffpowerA{\left(\bigcap_{\alpha\in S} I_{\alpha}\right)}{n}
\]
for all $n$ sufficiently large.  By the definition of differential closure, $b\in \diffidealA{\bigcap_{\alpha\in S} I_{\alpha}}$. This implies that $\diffidealA{\bigcap_{\alpha\in S} I_\alpha}\supseteq\bigcap_{\alpha\in S} \diffidealA{I_\alpha}.$
Thus, $\bigcap_{\alpha \in S} \diffidealA{I_{\alpha}} = \diffidealA{\bigcap_{\alpha\in S}I_{\alpha}}.$
\end{proof}

\begin{lemma}[{\cite[Proposition 2.17, Lemma 3.10]{BRENNER2019106843} }] \label{lemma:diffPowerLocalization}
Let $A$ be a Noetherian ring and let $R$ be an $A$ algebra essentially of finite type. Let $W\subseteq R$ be a multiplicative set. Then $W\invrs D_A(R) = D_A(W\invrs R)$. Further, we have $W\invrs \diffpowerA{I}{n} = (W\invrs I)^{\langle n \rangle_{W\invrs R/A}}$ for all $n$.
\end{lemma}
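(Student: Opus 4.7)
The plan is to prove the two equalities in sequence, with the first supplying the main tool for the second. The essentially-of-finite-type over Noetherian assumption enters through finite generation of the module of principal parts $P^n_{R/A}$.

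For the equality $W\invrs D_A(R) = D_A(W\invrs R)$, I would induct on the order of an operator. The base case $D^0_A(R) = R$ gives $W\invrs D^0_A(R) = W\invrs R = D^0_A(W\invrs R)$. For the inductive step, any $\partial \in D^n_A(R)$ extends uniquely to an operator $\widetilde{\partial} \in D^n_A(W\invrs R)$: the defining relation $[\widetilde{\partial}, s] \in D^{n-1}_A(W\invrs R)$ combined with compatibility on $R$ forces $\widetilde{\partial}$ on fractions via a quotient-rule type formula, and the commutators produced land in $D^{n-1}_A(W\invrs R) = W\invrs D^{n-1}_A(R)$ by induction. Conversely, given $\delta \in D^n_A(W\invrs R)$, the essentially-of-finite-type hypothesis lets us clear denominators: applying $\delta$ to a finite generating set of $R$ over $A$ produces only finitely many denominators, and a common $u \in W$ clears them all so that $u\delta \in D^n_A(R)$ and hence $\delta \in W\invrs D^n_A(R)$.

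For the equality $W\invrs\diffpowerA{I}{n} = (W\invrs I)^{\langle n\rangle_{W\invrs R/A}}$, the forward inclusion is immediate. Given $r \in \diffpowerA{I}{n}$ and $\delta \in D^{n-1}_A(W\invrs R)$, use the first part to write $\delta = \widetilde{\partial}/u$ with $\partial \in D^{n-1}_A(R)$; then $\delta(r/1) = \partial(r)/u \in W\invrs I$ because $\partial(r) \in I$. Thus $r/1$, and hence $r/w$, lies in $(W\invrs I)^{\langle n\rangle_{W\invrs R/A}}$.

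The reverse inclusion is the technical heart of the lemma. Given $r/w \in (W\invrs I)^{\langle n\rangle_{W\invrs R/A}}$, after multiplying by the unit $w/1$ it suffices to find $u \in W$ with $ur \in \diffpowerA{I}{n}$. For each $\partial \in D^{n-1}_A(R)$, applying the hypothesis to its extension gives a $u_\partial \in W$ with $u_\partial\partial(r) \in I$, but the denominator depends on $\partial$. To obtain one denominator that works for every $\partial$, I would invoke the finite generation of $D^{n-1}_A(R)$ as an $R$-module --- a consequence of $P^{n-1}_{R/A}$ being finitely generated when $R$ is essentially of finite type over Noetherian $A$ --- pick $R$-module generators $\partial_1,\ldots,\partial_k$, and take $u = u_{\partial_1}\cdots u_{\partial_k}$. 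The main obstacle will be converting the resulting uniform bound ``$u\,\partial(r) \in I$ for all $\partial \in D^{n-1}_A(R)$'' into the stronger conclusion ``$\partial(u^N r) \in I$ for all $\partial$'' needed to place $u^N r$ in $\diffpowerA{I}{n}$, since multiplication by $u$ does not commute with $\partial$. I would resolve this by nested induction on $n$, expanding $\partial(u^N r) = u^N\partial(r) + [\partial, u^N](r)$: the first term is in $I$ by construction, and since $[\partial, u^N] \in D^{n-2}_A(R)$, the correction term is controlled by the inductive hypothesis applied to $\diffpowerA{I}{n-1}$ after possibly enlarging $N$.
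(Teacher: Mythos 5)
This lemma is cited from \cite{BRENNER2019106843} and is not proved in the paper, so you are reconstructing an external proof rather than matching one given here. With that caveat, your overall strategy — use the module of principal parts to get finite generation of $D^{n-1}_A(R)$ over $R$, then clear denominators — is the right one, but the final step as you have written it does not close.

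The gap is in the reverse inclusion $(W\invrs I)^{\langle n\rangle_{W\invrs R/A}} \subseteq W\invrs\diffpowerA{I}{n}$. After using finite generation to produce one $u\in W$ with $u\,\partial(r)\in I$ for every $\partial\in D^{n-1}_A(R)$, you expand $\partial(u^N r)=u^N\partial(r)+[\partial,u^N](r)$ and propose to dispatch the commutator term via the inductive hypothesis ``for $\diffpowerA{I}{n-1}$.'' But that hypothesis produces a $v\in W$ with $vr\in\diffpowerA{I}{n-1}$, i.e.\ information about $\eta(vr)$ for $\eta\in D^{n-2}_A(R)$; you need information about $[\partial,u^N](r)$, and $\eta(vr)\neq v\,\eta(r)$ precisely because of the same commutator problem you are trying to solve. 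The recursion therefore does not terminate in the form you sketched. Fortunately, a closed form fixes it: the identity
\[
\partial\, u^N \;=\; \sum_{j=0}^{m}\binom{N}{j}\,u^{N-j}\,\partial^{(j)},
\qquad \partial^{(j)} := [\cdots[[\partial,u],u]\cdots,u]\ (j\text{-fold}),
\]
valid for $\partial$ of order $\le m$ (each $\partial^{(j)}$ has order $\le m-j$, so the sum truncates). Taking $N=m+1=n$ gives $\partial(u^n r)=\sum_{j=0}^{n-1}\binom{n}{j}u^{n-j}\partial^{(j)}(r)$, and every summand lies in $I$ because $u\,\partial^{(j)}(r)\in I$ by your uniform-denominator choice and $n-j\ge 1$. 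This replaces the nested induction with a single computation.

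You should also be aware that the argument in \cite{BRENNER2019106843} is more structural and avoids commutators entirely: it uses $D^{m}_A(R)\cong\operatorname{Hom}_R(P^{m}_{R/A},R)$ together with the facts that $P^{m}_{W\invrs R/A}\cong W\invrs P^{m}_{R/A}$, that localization commutes with $\operatorname{Hom}$ out of a finitely presented module, and that localization is exact. The first displayed equality then becomes a one-line $\operatorname{Hom}$-localization statement, and the second follows by writing $\diffpowerA{I}{n}$ as the kernel of the natural map $R\to\operatorname{Hom}_R(D^{n-1}_A(R),R/I)$ and localizing the exact sequence. Your ``clear denominators on a finite generating set'' sketch for $W\invrs D_A(R)\supseteq D_A(W\invrs R)$ is really this $\operatorname{Hom}$-localization argument in disguise, but stated for generators of $P^{n}_{R/A}$ rather than for ring generators of $R$ — a differential operator is not determined by its values on algebra generators, so you should phrase the denominator-clearing in terms of the principal-parts module explicitly.
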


\begin{lemma} \label{lemma:diffClosureLocalization}
Let $A$ be a Noetherian ring and let $R$ be a domain essentially of finite type over $A$. Let $W\subseteq R$ be a multiplicative set. Then we have an inclusion of sets:
\[
   \left\{ w\invrs x \in W\invrs R \mid x\in \diffidealA{I} \right\} \subseteq \diffidealA{W\invrs I}
\]
\end{lemma}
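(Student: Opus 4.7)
The plan is to show directly that for any $w \in W$ and $x \in \diffidealA{I}$, the element $w^{-1}x$ of $W^{-1}R$ lies in $\diffidealA{W^{-1}I}$, using Lemma \ref{lemma:diffPowerLocalization} as the key tool. By definition, $x \in \diffidealA{I}$ gives a nonzero $c \in R$ and an integer $N$ such that $c x^n \in \diffpowerA{I}{n}$ for all $n \geq N$. The natural candidate for a witness certifying $w^{-1}x \in \diffidealA{W^{-1}I}$ is the image $c/1 \in W^{-1}R$.

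To verify this works, I would apply the localization identity $W^{-1}\diffpowerA{I}{n} = (W^{-1}I)^{\langle n \rangle_{W^{-1}R/A}}$ from Lemma \ref{lemma:diffPowerLocalization} to conclude that $cx^n/1$ lies in $(W^{-1}I)^{\langle n \rangle_{W^{-1}R/A}}$ for all $n \geq N$. Since this latter set is an ideal of $W^{-1}R$, multiplying by $1/w^n$ yields
\[
(c/1)(w^{-1}x)^n \; = \; cx^n/w^n \; \in \; (W^{-1}I)^{\langle n \rangle_{W^{-1}R/A}},
\]
which is exactly the defining condition for $w^{-1}x \in \diffidealA{W^{-1}I}$, provided $c/1 \neq 0$.

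To handle the nonvanishing of the witness, I would note that if $0 \in W$ then $W^{-1}R = 0$ and the inclusion is vacuous; otherwise, because $R$ is a domain, the localization map $R \to W^{-1}R$ is injective, so $c \neq 0$ in $R$ implies $c/1 \neq 0$ in $W^{-1}R$.

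There is not really a main obstacle here: the content of the lemma is entirely packaged in the localization compatibility of Lemma \ref{lemma:diffPowerLocalization}, and once that is invoked, the argument is a direct unpacking of Definition \ref{closuredef}. The only point requiring a moment of care is the nonvanishing of the witness $c/1$, which is immediate once we use that $R$ is a domain.
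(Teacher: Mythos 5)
Your proof is correct and follows essentially the same route as the paper: invoke Lemma \ref{lemma:diffPowerLocalization} to push the witness $c$ across the localization, then absorb the factor $w^{-1}$. The only cosmetic difference is that the paper first concludes $x \in \diffidealA{W^{-1}I}$ and then cites Proposition \ref{isclosureopalmost}(3) to handle multiplication by $w^{-1}$, whereas you fold that multiplication directly into the computation $cx^n/w^n = (c/1)(w^{-1}x)^n$; your explicit check that $c/1 \neq 0$ in $W^{-1}R$ is a small point the paper leaves implicit.
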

\begin{proof}
  Let $x \in \diffidealA{I}$. By assumption, there exists a nonzero $c\in R$ with $cx^n \in \diffpowerA{I}{n}$ for all $n \gg 0$. By Lemma \ref{lemma:diffPowerLocalization}, this implies that $cx^n \in (W\invrs I)^{\langle n \rangle_{W\invrs R/A}}$, and so $x \in \diffidealA{W\invrs I}$. But then we're done, by Proposition \ref{isclosureopalmost}.
\end{proof}

Note that $R$ is a naturally a module over the (non-commutative) ring $D_A(R)$. We say that $R$ is a \emph{simple} $D_A(R)$-module if $R$ has no $D_A(R)$-submodules other than $0$ and $R$ itself. Note that any $D_A(R)$-submodule of $R$ must be an ideal of $R$, as multiplication by an element of $R$ is a differential operator. Further, $R$ is a simple $D_A(R)$-module if and only if, for all nonzero $c\in R$, there exists some $\partial \in D_A(R)$ with $\partial(c) = 1$. One can show, for instance by using Lemma \ref{lemma:diffPowerLocalization}, that $W\invrs R$ is a simple $D_A(W\invrs R)$-module when $R$ is a simple $D_A(R)$-module.

\begin{prop}\label{maximalclosure}
Let $A$ be a Noetherian ring and $R$ a domain essentially of finite type over $A$, and suppose $R$ is a simple $D_A(R)$-module. Let $\mathfrak{p} \subseteq R$ be a prime ideal. Then $\mathfrak{p} = \diffidealA{\mathfrak{p}}.$
\end{prop}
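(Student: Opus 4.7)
The plan is to establish the nontrivial inclusion $\diffidealA{\mathfrak{p}} \subseteq \mathfrak{p}$, since $\mathfrak{p} = \sqrt{\mathfrak{p}} \subseteq \diffidealA{\mathfrak{p}}$ is immediate from Proposition \ref{radicalcontainment}. The core idea is to localize at $\mathfrak{p}$, thereby turning any hypothetical element of $\diffidealA{\mathfrak{p}}\setminus \mathfrak{p}$ into a unit; the simplicity hypothesis will then force a contradiction by producing a differential operator sending a witness element to $1$.

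Suppose for contradiction that there exists $r \in \diffidealA{\mathfrak{p}}$ with $r \notin \mathfrak{p}$, and set $W = R\setminus \mathfrak{p}$. First I would apply Lemma \ref{lemma:diffClosureLocalization} to conclude that $r/1 \in \diffidealA{\mathfrak{p} R_{\mathfrak{p}}}$ in the localization $R_{\mathfrak{p}} = W\invrs R$. Since $r \notin \mathfrak{p}$, the element $r/1$ is a unit in $R_{\mathfrak{p}}$, so by Proposition \ref{isclosureopalmost}(3)---the differential closure is closed under multiplication by ring elements---we obtain $1 \in \diffidealA{\mathfrak{p} R_{\mathfrak{p}}}$.

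Unwinding the definition of differential closure, this means there is a nonzero element $c' \in R_\mathfrak{p}$ such that $c' \in (\mathfrak{p} R_{\mathfrak{p}})^{\langle n\rangle_A}$ for all $n \gg 0$; equivalently, $\partial(c') \in \mathfrak{p} R_{\mathfrak{p}}$ for every $\partial \in D_A^{n-1}(R_{\mathfrak{p}})$ once $n$ is large. Now I invoke simplicity: as noted just before the proposition, the hypothesis that $R$ is a simple $D_A(R)$-module passes to $R_{\mathfrak{p}}$, so there exists $\partial_0 \in D_A(R_{\mathfrak{p}})$ with $\partial_0(c') = 1$. Choosing $n$ large enough that $n - 1 \geq \operatorname{ord}(\partial_0)$ and $c' \in (\mathfrak{p} R_{\mathfrak{p}})^{\langle n\rangle_A}$, we reach $1 = \partial_0(c') \in \mathfrak{p} R_{\mathfrak{p}}$, a contradiction.

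The main ingredients are thus: Lemma \ref{lemma:diffClosureLocalization} to move the problem into $R_{\mathfrak{p}}$, Proposition \ref{isclosureopalmost}(3) to upgrade $r/1$ to $1$ inside the differential closure, and the simplicity hypothesis to produce the differential operator $\partial_0$ that triggers the contradiction. No step is technically deep once these tools are in hand; the main conceptual point---and the one that really requires the simplicity assumption---is the last one, where we must ensure that the ``witness'' $c'$ in the definition of differential closure cannot simultaneously lie in arbitrarily high differential powers of $\mathfrak{p} R_{\mathfrak{p}}$ and be nonzero.
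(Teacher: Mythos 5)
Your proof is correct and follows essentially the same route as the paper: localize at $\mathfrak{p}$ via Lemma \ref{lemma:diffClosureLocalization}, use closure under multiplication (Proposition \ref{isclosureopalmost}(3)) to get $1\in\diffidealA{\mathfrak pR_{\mathfrak p}}$, and then invoke simplicity of $R_{\mathfrak p}$ as a $D_A(R_{\mathfrak p})$-module to send the witness element to $1$, contradicting $\partial(c')\in\mathfrak pR_{\mathfrak p}$. The only cosmetic difference is that you cite Proposition \ref{radicalcontainment} for the easy inclusion while the paper cites Proposition \ref{isclosureopalmost}; both are fine since $\mathfrak p$ is radical.
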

\begin{proof}
From Proposition \ref{isclosureopalmost}, we know that $\mathfrak{p} \subseteq \diffideal{\mathfrak{p}}$, so it suffices to prove $\diffideal{\mathfrak{p}} \subseteq \mathfrak{p}$. So suppose that $\diffideal{\mathfrak {p}}$ contains an element not in $\mathfrak p$. Then Lemma \ref{lemma:diffClosureLocalization} tells us that $\diffidealA{\mathfrak pR_{\mathfrak p}}$ contains a unit $u$, and by Proposition \ref{isclosureopalmost} we have $1 \in \diffidealA{\mathfrak pR_{\mathfrak p}}$. Thus, there exists a nonzero $c$ such that for all $n$ sufficiently large, $c\in \diffpowerA{(\mathfrak pR_{\mathfrak p})}{n}$. This means $\partial(c) \in \mathfrak pR_{\mathfrak p}$ for all $\partial\in D_A(R_{\mathfrak p})$. But $R_{\mathfrak p}$ is a simple $D_A(R_{\mathfrak p})$-module and $c\neq 0$, so we must have $\partial(c) = 1$ for some  $\partial\in D_A(R_{\mathfrak p})$, a contradiction. 
\end{proof}

\begin{thm}\label{idempotenceradical}
Let $A$ be a Noetherian ring and let $R$ be an $A$ algebra essentially of finite type. Suppose $R$ is a simple $D_A(R)$-module, and $I \subseteq R$ is an ideal. Then 
\[
  \diffidealA{I} = \sqrt{I}
\]
\end{thm}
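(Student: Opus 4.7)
The plan is to prove the two containments separately, reducing the hard one to the prime case that was already established in Proposition~\ref{maximalclosure}.

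The containment $\sqrt{I} \subseteq \diffidealA{I}$ is exactly Proposition~\ref{radicalcontainment}, so no further work is needed there. For the reverse containment $\diffidealA{I} \subseteq \sqrt{I}$, I would first use the fact that $R$ is Noetherian (since it is essentially of finite type over the Noetherian ring $A$) to write $\sqrt{I}$ as a \emph{finite} intersection of its minimal primes: $\sqrt{I} = \bigcap_{i=1}^{k} \mathfrak{p}_i$.

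Next I would apply Proposition~\ref{isclosureopalmost}(2) to the inclusion $I \subseteq \sqrt{I}$ to get $\diffidealA{I} \subseteq \diffidealA{\sqrt{I}}$. Then, using the finite-intersection case of Lemma~\ref{diffclosureintersection}, I rewrite
\[
\diffidealA{\sqrt{I}} = \diffidealA{\bigcap_{i=1}^{k} \mathfrak{p}_i} = \bigcap_{i=1}^{k} \diffidealA{\mathfrak{p}_i}.
\]
Finally, since $R$ is a simple $D_A(R)$-module, Proposition~\ref{maximalclosure} tells us that $\diffidealA{\mathfrak{p}_i} = \mathfrak{p}_i$ for each $i$, so the intersection on the right equals $\bigcap_{i=1}^{k}\mathfrak{p}_i = \sqrt{I}$. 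Chaining these gives $\diffidealA{I} \subseteq \sqrt{I}$, completing the proof.

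There is essentially no obstacle here: every ingredient has already been set up. The only subtlety worth double-checking is that Proposition~\ref{maximalclosure} genuinely applies to each $\mathfrak{p}_i$, which it does because simplicity of $R$ as a $D_A(R)$-module is a hypothesis of the theorem, and the minimal primes of $I$ are prime ideals of $R$. Noetherianity of $R$ (needed to ensure finitely many minimal primes, so that Lemma~\ref{diffclosureintersection} applies in its equality form) follows from $R$ being essentially of finite type over the Noetherian ring $A$.
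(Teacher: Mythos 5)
Your proof is correct and follows essentially the same route as the paper's: reduce to the prime case via Proposition~\ref{maximalclosure} by decomposing $\sqrt{I}$ as an intersection of primes, pushing the closure inside the intersection with Lemma~\ref{diffclosureintersection}, and then combining with Propositions~\ref{radicalcontainment} and~\ref{isclosureopalmost}. The only cosmetic difference is that you use the equality case of Lemma~\ref{diffclosureintersection} after restricting to the finitely many minimal primes, whereas the paper intersects over all primes containing $I$ and invokes only the one-sided containment $\diffidealA{\bigcap I_\alpha} \subseteq \bigcap \diffidealA{I_\alpha}$, which holds for arbitrary families; both work and yield the same conclusion.
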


\begin{proof}
We already know that $\sqrt{I} \subseteq \diffidealA{I}$ from Proposition \ref{radicalcontainment}, so it suffices to show that $\diffidealA{I}\subseteq \sqrt I$. 
From Proposition \ref{isclosureopalmost}, we have 
\[
  \diffidealA{I} \subseteq \diffidealA{\sqrt I}.
\]
Because $R$ is Noetherian, we know
\[
 \sqrt I = \bigcap_{I \subseteq \mathfrak p} \mathfrak p,
\]
where $\mathfrak p$ ranges over the prime ideals containing $I$. Thus 
\[
  \diffidealA{\sqrt I} = \diffidealA{\bigcap_{I \subseteq \mathfrak p} \mathfrak p} \subseteq \bigcap_{I \subseteq \mathfrak p} \diffidealA{\mathfrak p},
\]
by Proposition \ref{diffclosureintersection}. Finally, we can use Proposition \ref{maximalclosure} to see that
\[
  \bigcap_{I \subseteq \mathfrak p} \diffidealA{\mathfrak p} = \bigcap_{I \subseteq \mathfrak p} \mathfrak p = \sqrt I,
\]
as desired.
\end{proof}

\begin{cor}\label{radicalDmodule}
Let $A$ be a Noetherian ring and let $R$ be an $A$ algebra essentially of finite type. Suppose that $0\neq I \subseteq R$ is a $D_A(R)$-submodule of $R$. Then $\diffidealA{I} = R$. In particular, we have $\sqrt{J} = \diffideal{J}$ for all ideals $J \subseteq R$ if and only if $R$ is a simple $D_A(R)$-module

\begin{proof}
Let $c\in I$ be any nonzero element and let $r \in R$. Then $cr^n \in I$ for all and $n>0$, because $I$ is an ideal. As $I$ is a $D_A(R)$-submodule of $R$, we know that $\partial(I) \subseteq I$ for all $\partial \in D_A(R)$. In particular, we have $\partial(cr^n) \in I$ for all differential operators $\partial$ of order less than $n$. This shows that $r \in \diffidealA{I}$, as desired. 
\end{proof}
\end{cor}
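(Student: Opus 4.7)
The plan is to handle the two parts of the corollary in sequence: first establish the main technical claim that any nonzero $D_A(R)$-submodule of $R$ has differential closure equal to $R$, and then derive the biconditional as a consequence (combined with Theorem \ref{idempotenceradical}).

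For the first claim, I would argue directly from the definition. Let $I$ be a nonzero $D_A(R)$-submodule of $R$, pick any nonzero $c \in I$, and let $r \in R$ be arbitrary. Then for every $n > 0$ we have $cr^n \in I$ (as $I$ is an ideal of $R$), and by the $D_A(R)$-stability hypothesis, $\partial(cr^n) \in I$ for every $\partial \in D_A(R)$, in particular for every $\partial \in D_A^{n-1}(R)$. Hence $cr^n \in \diffpowerA{I}{n}$ for all $n > 0$, witnessing that $r \in \diffidealA{I}$.

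For the biconditional, the forward direction (\emph{if} $R$ is simple as a $D_A(R)$-module, then $\sqrt{J} = \diffidealA{J}$ for all ideals $J$) is precisely Theorem \ref{idempotenceradical}. For the reverse direction, I would argue by contrapositive: suppose $R$ is \emph{not} a simple $D_A(R)$-module. By definition there exists a proper nonzero $D_A(R)$-submodule $I \subseteq R$ (which is automatically an ideal). Applying the first part of the corollary to $I$ yields $\diffidealA{I} = R$, while on the other hand $\sqrt{I} \subseteq R$ is a proper ideal because $I$ is proper. So $\sqrt{I} \neq \diffidealA{I}$, exhibiting an ideal for which the two operations disagree.

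The argument is essentially bookkeeping once the first part is nailed down, so I don't anticipate a serious obstacle; the only place to be careful is in recognizing that a nonzero proper $D_A(R)$-submodule of $R$ is automatically a proper ideal (since $R \subseteq D_A(R)$ via $D^0_A(R)$), which is what lets us conclude $\sqrt{I} \subsetneq R$ in the contrapositive step.
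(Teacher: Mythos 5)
Your argument for the main claim is exactly the paper's proof: pick a nonzero $c \in I$, note $cr^n \in I$ for all $n$ since $I$ is an ideal, and use $D_A(R)$-stability of $I$ to conclude $cr^n \in \diffpowerA{I}{n}$ for all $n$, hence $r \in \diffidealA{I}$. The paper leaves the biconditional as an ``in particular,'' and your explicit contrapositive argument (combined with Theorem \ref{idempotenceradical}) is precisely the intended completion; no discrepancy.
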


\bibliographystyle{alpha}
\bibliography{bibliography}

\end{document}